\newcommand{\C}{\mathbb{C}}
\newcommand{\R}{\mathbb{R}}
\newcommand{\cH}{\mathcal{H}}
\newcommand{\cI}{\mathcal{I}}
\newcommand{\cJ}{\mathcal{J}}
\newcommand{\cK}{\mathcal{K}}
\renewcommand{\a}{\alpha}
\renewcommand{\b}{\beta}
\newcommand{\g}{\gamma}
\renewcommand{\d}{\delta}
\newcommand{\e}{\varepsilon}
\renewcommand{\r}{\rho}
\newcommand{\p}{\partial}
\newcommand{\bp}{\bar{\partial}}
\newcommand{\dd}{\sqrt{-1}\partial \bar{\partial}}
\newcommand{\ddt}{\frac{d}{dt}}
\newcommand{\cf}{{\rm cf.\ }} 
\newcommand{\eg}{{\rm e.g.\ }} 
\newcommand{\ie}{{\rm i.e.\ }} 
\renewcommand{\Re}{\mathrm{Re}}
\renewcommand{\Im}{\mathrm{Im}}
\DeclareMathOperator{\Arg}{Arg}
\DeclareMathOperator{\Bl}{Bl}
\DeclareMathOperator{\dHYM}{dHYM}
\DeclareMathOperator{\HC}{HC}
\renewcommand{\leq}{\leqslant}
\renewcommand{\geq}{\geqslant}
\renewcommand{\hat}{\widehat}
\renewcommand{\tilde}{\widetilde}
\numberwithin{equation}{section}       
\newtheorem{prop} {Proposition} [section]
\newtheorem{thm}[prop] {Theorem} 
\newtheorem{lem}[prop] {Lemma}
\newtheorem{cor}[prop]{Corollary}
\theoremstyle{remark}
\title{Collapsing of the line bundle mean curvature flow on K\"ahler surfaces} 
\date{\today}
\author[R. Takahashi]{Ryosuke Takahashi}
\address{Faculty of Mathematics\\
 Kyushu University\\
744\\
Motooka\\
Nishi-ku\\
Fukuoka\\
819-0395\\
 JAPAN}
\email{rtakahashi@math.kyushu-u.ac.jp}
\subjclass[2010]{Primary 53C55; Secondary 53C44}
\keywords{deformed Hermitian--Yang--Mills equation, line bundle mean curvature flow, degenerate complex Monge--Amp\`ere equation}
\begin{document}
\maketitle
\begin{abstract}
We study the line bundle mean curvature flow on K\"ahler surfaces under the hypercritical phase and a certain semipositivity condition. We naturally encounter such a condition when considering the blowup of K\"ahler surfaces. We show that the flow converges smoothly to a singular solution to the deformed Hermitian--Yang--Mills equation away from a finite number of curves of negative self-intersection on the surface. As an application, we obtain a lower bound of a Kempf--Ness type functional on the space of potential functions satisfying the hypercritical phase condition.
\end{abstract}
\section{Introduction}
Let $X$ a compact $n$-dimensional complex manifold with a fixed K\"ahler form $\a$ and a closed real $(1,1)$-form $\hat{F}$. For any smooth function $\phi \in C^\infty(X;\R)$ we set $F_\phi:=\hat{F}+\dd \phi$ (but we often drop the subscript $\phi$ when the dependence is clear). We assume that the integral
\[
Z:=\int_X (\a+\sqrt{-1} F_\phi)^n
\]
does not vanish. Then we can define the unique number $e^{\sqrt{-1} \hat{\Theta}} \in U(1)$ by requiring $e^{-\sqrt{-1} \hat{\Theta}} Z \in \R_{>0}$. We say that the function $\phi$ satisfies the {\it deformed Hermitian--Yang--Mills (dHYM) equation} if
\begin{equation} \label{dHYM}
\Im \bigg( e^{-\sqrt{-1} \hat{\Theta}} (\a+\sqrt{-1} F_\phi)^n \bigg)=0.
\end{equation}
This equation first appeared in the physics literature \cite{MMMS00}, and \cite{LYZ01} from mathematical side as the mirror object to a special Lagrangian in the setting of semi-flat mirror symmetry, and is studied actively in recent years (\eg \cite{Che19,CJY20,CY18,HY19,JY17,Pin19,SS19}). Let $\lambda_i$ be the eigenvalues of the endomorphism $F_{i \bar{j}} \a^{k \bar{j}}$ and define the {\it Lagrangian phase} by
\[
\Theta_\a:=\sum_{i=1}^n \arctan \lambda_i.
\]
In practice, we often write the dHYM equation \eqref{dHYM} in terms of $\Theta_\a$ as
\[
\Theta_\a=\hat{\Theta} \quad \text{(mod. $2\pi$)}.
\]
In order to get the solution, Jacob--Yau \cite{JY17} introduced the following parabolic evolution equation, called the {\it line bundle mean curvature flow}\footnote{In \cite{JY17}, they made an assumption that the form $F$ arises as curvature of a fiber metric on a holomorphic line bundle. But this assumption is only for aesthetic purposes and not essentially used in their paper.} (LBMCF):
\begin{equation} \label{LBMCF}
\ddt \phi=\Theta_\a-\hat{\Theta}.
\end{equation}
The LBMCF is the gradient flow of the {\it volume functional}
\[
V(\phi):=\int_X \bigg| \frac{(\a+\sqrt{-1}F_\phi)^n}{\a^n} \bigg| \a^n, \quad \phi \in C^\infty(X;\R).
\]
In what follows, we only consider the case $\hat{\Theta}>0$ (but the similar arguments also work for the case $\hat{\Theta}<0$). We say that $\phi \in C^\infty(X;\R)$ is {\it hypercritical} if it lies in the set
\[
\cH_{\HC}:=\bigg\{\phi \in C^\infty(X;\R) \bigg| \Theta_\a(\phi)>(n-1)\frac{\pi}{2} \bigg\}.
\]
Since $\arctan(\cdot)$ takes values in $(-\frac{\pi}{2}, \frac{\pi}{2})$, the condition $\phi \in \cH_{\HC}$ yields that $\lambda_i>c$ for all $i$, where the constant $c>0$ depends only on $\inf_X \Theta_\a(\phi)$. In particular, the form $F_\phi$ must be K\"ahler. In \cite{JY17}, they showed that if the K\"ahler metric $\a$ has non-negative orthogonal bisectional curvature and the initial data $\phi_0$ satisfies $\phi_0 \in \cH_{\HC}$, then the flow converges to the solution to the dHYM equation. While the limiting behavior of the flow is of independent interest even when $X$ does not admit the solution. The reason is that in the absence of the solution, the flow fails to converge, which provides an optimal way of detecting instability (for instance, see \cite{CHT17,CSW18,DS16}). Also, as studied in \cite{CY18}, the dHYM equation has a moment map interpretation which interpolates that of Hermitian--Yang--Mills equation and $J$-equation \cite{Che00,Don99}. For this reason, one can expects that certain aspects of the $J$-flow carry over to the LBMCF.

In this paper, we study the limiting behavior of the LBMCF on K\"ahler surfaces under a certain semipositivity assumption, motivated by a work on the $J$-flow \cite{FLSW14}. When $n=2$, it was shown in \cite[Theorem 1.2]{JY17} that the existence of the solution to \eqref{dHYM} is equivalent to the K\"ahler condition of the associated cohomology class
\begin{equation} \label{JYcriterion}
\cot \hat{\Theta} [\a]+[\hat{F}]>0.
\end{equation}
We relax this condition as
\begin{equation}
\cot \hat{\Theta} [\a]+[\hat{F}] \geq 0,
\end{equation}
\ie the class $\cot \hat{\Theta} [\a]+[\hat{F}]$ is represented by some smooth closed semipositive real $(1,1)$-form. We remark that if $\hat{\Theta}>\frac{\pi}{2}$, then the semipositivity $\cot \hat{\Theta} [\a]+[\hat{F}] \geq 0$ implies that $[\hat{F}]$ is a K\"ahler class. The following is our main theorem:
\begin{thm} \label{convf}
Let $X$ be a compact complex surface with a K\"ahler form $\a$ and a closed real $(1,1)$-form $\hat{F}$ such that
\[
\cot \hat{\Theta} \a+\hat{F} \geq 0, \quad \hat{\Theta}>\frac{\pi}{2}.
\]
Assume $\phi_0 \in \cH_{\HC}$ (this condition assures that $F_{\phi_t}$ is K\"ahler for all $t$). Then there exist a finite number of curves $C_i$ ($i=1,\ldots,N$) on $X$ of negative self-intersection such that the line bundle mean curvature flow $\phi_t$ converges to a bounded function $\phi_\infty$ in $C_{\rm loc}^\infty(X \backslash \bigcup_i C_i)$ as $t \to \infty$, where $F_{\phi_\infty}:=\hat{F}+\dd \phi_\infty$ is a K\"ahler current, which is smooth on $X \backslash \bigcup_i C_i$ and satisfies the deformed Hermitian--Yang--Mills equation
\begin{equation} \label{dHYMtwo}
\Im \bigg( e^{-\sqrt{-1} \hat{\Theta}} (\a+\sqrt{-1} F_{\phi_\infty})^2 \bigg)=0
\end{equation}
on $X \backslash \bigcup_i C_i$. Moreover, the convergence $F_{\phi_t} \to F_{\phi_\infty}$ as well as \eqref{dHYMtwo} holds on $X$ in the sense of currents.
\end{thm}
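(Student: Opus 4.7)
The plan is to reduce the LBMCF on a K\"ahler surface to a degenerate complex Monge--Amp\`ere problem in a big and nef cohomology class, and then run parabolic a priori estimates in the spirit of Fang--Lai--Song--Weinkove's treatment of the $J$-flow. The first step is a reformulation: set $\o_t := \cot\hat{\Theta}\,\a + F_{\phi_t} = \hat{\o} + \dd\phi_t$ with $\hat{\o} := \cot\hat{\Theta}\,\a + \hat{F}\geq 0$. Expanding $(\a+\sqrt{-1}F)^2$ in the eigenvalues $\la_i$ of $F$ with respect to $\a$ shows that on a surface \eqref{dHYMtwo} is equivalent to the complex Monge--Amp\`ere equation $\o^2 = \csc^2\hat{\Theta}\,\a^2$; and using the cohomological identity defining $\hat{\Theta}$ one computes $[\hat{\o}]^2 = \csc^2\hat{\Theta}\,[\a]^2 > 0$, so $[\hat{\o}]$ is big and nef. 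The parabolic maximum principle applied to $\Theta_\a - \hat{\Theta}$ preserves hypercriticality along the flow, so $F_{\phi_t}$ stays K\"ahler and $\o_t$ stays strictly positive for all $t\geq 0$.

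Next, because $[\hat{\o}]$ is big and nef with $[\hat{\o}]^2 > 0$, the Hodge index theorem forces any irreducible curve $C$ satisfying $[\hat{\o}]\cdot C = 0$ to have $C^2<0$, and the intersection form is negative definite on the span of such curves; hence there are only finitely many, $C_1,\ldots,C_N$, and they constitute the null locus of $[\hat{\o}]$. These will be the curves off which smooth convergence holds.

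The core of the argument is three a priori estimates. (i) A uniform $L^\infty$ bound on $\phi_t$: adapting Ko\l{}odziej / Eyssidieux--Guedj--Zeriahi pluripotential techniques to the flow, one uses the trivial bound $|\dot\phi|\leq C$ together with the hypercritical rewriting to control the Monge--Amp\`ere mass of $\o_t$ against $\csc^2\hat{\Theta}\,\a^2$, and invokes Ko\l{}odziej's $L^\infty$ estimate in the big class $[\hat{\o}]$. (ii) Interior $C^\infty$ estimates on every $K\Subset X\setminus\bigcup_i C_i$: one runs a Song--Weinkove / Tsuji--Tian--Zhang Schwarz-lemma computation for $\log \Tr_\a \o_t - A\rho$, where $\rho$ is a quasi-psh function with logarithmic poles along $\bigcup C_i$ so that $\hat{\o} + \dd\rho$ is strictly positive on $K$. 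The hypercriticality supplies the correct sign of the linearised operator and the maximum principle closes to give a $C^2$ bound on $K$; Evans--Krylov and Schauder bootstrapping then yield smooth bounds. (iii) Long-time existence follows from hypercriticality, the $C^2$ bounds on compact subsets of $X\setminus\bigcup C_i$, and a standard continuation argument.

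Combined with the monotonicity of the volume functional $V$ along its gradient flow, which yields $\int_0^\infty \|\Theta_\a - \hat{\Theta}\|_{L^2}^2\,dt<\infty$ and hence $\|\Theta_\a - \hat{\Theta}\|_{L^2}\to 0$ along a sequence $t_k\to\infty$, any $C^\infty_{\rm loc}$ subsequential limit $\phi_\infty$ on $X\setminus\bigcup C_i$ solves the dHYM equation there. The $L^\infty$ bound yields a bounded $\hat{\o}$-psh limit, and uniqueness of the bounded $\hat{\o}$-psh solution to the Monge--Amp\`ere equation in the big class (up to a constant, fixed by the normalisation of the flow) upgrades subsequential to full convergence in $C^\infty_{\rm loc}$. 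Globally, $F_{\phi_\infty} = \o_\infty - \cot\hat{\Theta}\,\a \geq |\cot\hat{\Theta}|\,\a$ as currents, so $F_{\phi_\infty}$ is a K\"ahler current; the convergence in currents on $X$ follows from compactness of positive currents with bounded mass. The hard step will be the uniform $L^\infty$ bound on $\phi_t$: unlike the K\"ahler--Ricci flow, the LBMCF is not itself a Monge--Amp\`ere flow, so one must exploit the hypercritical reformulation carefully to bring pluripotential theory to bear uniformly along the evolution, and it is here that the semipositivity (as opposed to strict positivity) of $[\hat{\o}]$ is most delicate.
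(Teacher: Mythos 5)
Your overall architecture (rewrite dHYM on a surface as $\o^2=\csc^2\hat{\Theta}\,\a^2$ in the big--nef class $[\hat{\Omega}]=\cot\hat{\Theta}[\a]+[\hat F]$, identify the null locus by Hodge index, prove uniform estimates off the null curves, then pass to the limit) matches the paper. But the step you yourself flag as hard --- the uniform $C^0$ bound --- is where your plan actually breaks, and a second issue arises with the order of long-time existence and local estimates.

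On the $C^0$ bound: you propose to control the Monge--Amp\`ere mass of $\o_t$ against $\csc^2\hat{\Theta}\,\a^2$ using $|\dot\phi_t|\le C$ and then invoke Ko\l{}odziej/EGZ directly on $\phi_t$. This does not go through: with $\la_1,\la_2$ the eigenvalues of $F_{\phi_t}$, one has $\o_t^2/\a^2 = \cot^2\hat{\Theta}+2\cot\hat{\Theta}(\la_1+\la_2)+\la_1\la_2$, which is \emph{not} a priori bounded along the flow even though $\Theta_\a$ stays in a compact subinterval of $(\tfrac{\pi}{2},\pi)$ --- one eigenvalue can run off to infinity while the other stays bounded. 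So the hypotheses of a Ko\l{}odziej-type $L^\infty$ estimate are not available. The paper instead first solves the \emph{static} degenerate equation $(\hat{\Omega}+\dd\psi)^2=(1+\cot^2\hat{\Theta})\a^2$ via EGZ/Tsuji, getting a bounded $\hat{\Omega}$-psh $\psi$ that is smooth off $\bigcup_i C_i$, and then uses $\psi$ as a \emph{barrier} in a maximum-principle argument for the quantity $\Phi_\e:=\phi_t-(1+\e)\psi+\e\sum_i a_i\log|s_i|_{h_i}^2-\arctan(\e/A)t$, together with the exact identity
\[
\dot\phi_t=\arctan\!\left[\frac{1}{\Re\zeta\,\sin\hat{\Theta}\cos\hat{\Theta}+\Im\zeta\,\sin^2\hat{\Theta}}\left(\frac{(\hat{\Omega}+\dd\phi)^2}{(\hat{\Omega}+\dd\psi)^2}-1\right)\right]
\]
and a case analysis on the signs of the eigenvalues of $\hat\Omega+\dd\phi$. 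This is the FLSW14 mechanism; it sidesteps the need for any a priori control on the mass of $\o_t^2$. Your version, as written, is missing this idea.

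On long-time existence: you derive it from interior $C^2$ bounds on compact subsets of $X\setminus\bigcup_i C_i$, but those say nothing about behaviour near $\bigcup_i C_i$ and therefore cannot rule out finite-time blow-up on the null curves. The paper proves long-time existence \emph{before} and independently of the cohomological hypothesis, via time-dependent global bounds ($\|\phi_t\|_{C^0}\le C(T+1)$ and $\log v\le C(T+1)$, the latter from a lower bound on the orthogonal bisectional curvature and the fact that hypercriticality makes all eigenvalues positive). Relatedly, for the time-independent second-order estimate the paper does not run a Schwarz-lemma computation for $\log\Tr_\a\o_t$ (the linearisation is $\Delta_\eta$, not $\Delta_{\o_t}$, so the usual evolution identities do not apply); it instead estimates $\log v$ and closes with a ``parabolic $C$-subsolution'' inequality in the style of Phong--T\^o and Sz\'ekelyhidi, exploiting the strict positivity $\cot\hat{\Theta}\,\a+\hat F-\sum a_i R_{h_i}>\e\a$. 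If you want the Schwarz-lemma route you would need to supply that computation in full; as stated it is not clear it closes for this operator.
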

A similar semipositivity condition and collapsing property of the $J$-flow were studied in \cite{FLSW14}. The proof of Theorem \ref{convf} proceeds in the same way as \cite{FLSW14,SW08}, \ie we rewrite \eqref{dHYM} as a degenerate complex Monge--Amp\`ere equation \eqref{dgMA}, and apply a result of \cite{EGZ09} to get a singular solution $\psi$. The function $\psi$ is used to obtain the $C^0$-estimate of $\phi_t$. Unlike the $J$-flow, we need a careful choice of the initial metric $F_{\phi_0}$ since the operator we study fails to be concave in general. Following \cite{JY17,CJY20}, we require the assumption $\phi_0 \in \cH_{\HC}$ that assures the concavity of the operator along the LBMCF, and hence the standard Evans--Krylov theory \cite{Kry82,Wan12} does apply. Also we can construct some examples satisfying all of the assumptions in Theorem \ref{convf}, as a small deformation of the pullback of the ``trivial solution'' $(\a,m\a)$ ($m>1$) on the blowup of K\"ahler surfaces.

As an application, we obtain a lower bound of a Kempf--Ness type functional. According to \cite{CY18}, we set
\[
\cH:=\bigg\{ \phi \in C^\infty(X;\R) \bigg| \hat{\Theta}-\frac{\pi}{2} < \Theta_\a(\phi) < \hat{\Theta}+\frac{\pi}{2} \bigg\},
\]
and define the {\it $\cJ$-functional} by the variational formula
\[
\d \cJ(\d \phi)=-\int_X \d \phi \Im \big( e^{-\sqrt{-1}\hat{\Theta}}(\a+\sqrt{-1} F_\phi)^n \big).
\]
So $\phi \in \cH$ is a critical point of $\cJ$ if and only if $\phi$ solves the dHYM equation $\Theta_\a(\phi)=\hat{\Theta}$. The $\cJ$-functional plays a role of the Kempf--Ness functional in an infinite dimensional GIT picture (see \cite[Section 2]{CY18} for more details). When $\hat{\Theta} \in ((n-1)\frac{\pi}{2},n \frac{\pi}{2})$, one can easily see that $\cH_{\HC} \subset \cH$. In the absence of the solution to \eqref{dHYM}, it is important to study the boundary behavior of the $\cJ$-functional. As a corollary of Theorem \ref{convf}, we obtain the following:
\begin{cor} \label{Jbound}
Let $X$ be a compact complex surface with a K\"ahler form $\a$ and a closed real $(1,1)$-form $\hat{F}$ such that
\[
\cot \hat{\Theta} \a+\hat{F} \geq 0, \quad \hat{\Theta}>\frac{\pi}{2}.
\]
Then the $\cJ$-functional is bounded from below on $\cH_{\HC}$.
\end{cor}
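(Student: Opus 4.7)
The strategy is to identify the line bundle mean curvature flow as the downward gradient flow of $\cJ$ and to combine its monotonicity with the convergence in Theorem \ref{convf}. First, I would verify monotonicity: from the polar decomposition
\[
e^{-\sqrt{-1}\hat{\Theta}}(\a + \sqrt{-1}F_\phi)^n = \left(\prod_{i=1}^n \sqrt{1+\la_i^2}\right) e^{\sqrt{-1}(\Theta_\a - \hat{\Theta})}\, \a^n,
\]
the variational formula for $\cJ$ combined with the flow equation $\ddt \phi_t = \Theta_\a - \hat{\Theta}$ yields
\[
\ddt \cJ(\phi_t) = -\int_X (\Theta_\a - \hat{\Theta})\sin(\Theta_\a - \hat{\Theta}) \prod_{i=1}^n \sqrt{1+\la_i^2}\, \a^n \leq 0,
\]
where non-positivity uses $x\sin x \geq 0$ together with the hypercritical bound $|\Theta_\a(\phi_t) - \hat{\Theta}| < \pi/2$ (both $\Theta_\a$ and $\hat{\Theta}$ lie in $(\pi/2,\pi)$). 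In particular $\cJ(\phi_0) \geq \cJ(\phi_t)$ along the whole flow.

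Next I would push the inequality to $t \to \infty$. Theorem \ref{convf} provides a bounded limit $\phi_\infty$ with $\phi_t \to \phi_\infty$ in $C^\infty_{\rm loc}(X \setminus \bigcup_i C_i)$, $F_{\phi_t} \to F_{\phi_\infty}$ as currents, and (through the $C^0$-estimate used in its proof) a uniform bound $\|\phi_t\|_{L^\infty} \leq M$. Starting from the integrated variational identity
\[
\cJ(\phi) - \cJ(0) = -\int_0^1 \int_X \phi\, \Im\!\left(e^{-\sqrt{-1}\hat{\Theta}}(\a + \sqrt{-1}F_{s\phi})^2\right) ds
\]
and expanding the integrand into contributions in $\a^2$, $\a \wedge F_{s\phi}$, and $F_{s\phi}^2$, the $\a^2$-term passes to the limit by dominated convergence while the mixed and Monge--Amp\`ere contributions pass by Bedford--Taylor continuity of Monge--Amp\`ere operators along uniformly bounded $\a$-psh convergence. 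This yields $\lim_{t \to \infty}\cJ(\phi_t) = \cJ(\phi_\infty)$ in the pluripotential sense.

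Finally, I would show $\cJ(\phi_\infty)$ is independent of the initial datum $\phi_0$. The potential $\phi_\infty$ is obtained from the bounded pluripotential solution of the degenerate complex Monge--Amp\`ere equation underlying dHYM in the proof of Theorem \ref{convf}, and EGZ uniqueness forces this solution to be unique modulo additive constants. Since $\int_X \Im(e^{-\sqrt{-1}\hat{\Theta}}(\a + \sqrt{-1}F_\phi)^n) = \Im(e^{-\sqrt{-1}\hat{\Theta}}Z) = 0$ by the very definition of $\hat{\Theta}$, the functional $\cJ$ is invariant under constant shifts, so $c_0 := \cJ(\phi_\infty)$ depends only on $(\a, \hat{F}, \hat{\Theta})$. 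Combining monotonicity with this uniform value gives $\cJ(\phi_0) \geq c_0 > -\infty$ for every $\phi_0 \in \cH_{\HC}$. The step I expect to require the most care is the Bedford--Taylor passage-to-the-limit in the cubic term $\int_X \phi_t(\dd \phi_t)^2$: one must verify that $F_{\phi_\infty}^2$ does not concentrate mass on the exceptional curves $\bigcup_i C_i$, so that the $C^\infty_{\rm loc}$ convergence on the complement controls the full integral.
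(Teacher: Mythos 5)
Your proposal is correct and follows essentially the same route as the paper: monotonicity of $\cJ$ along the flow (which is exactly Proposition \ref{monotonicity}(5), with the same polar-decomposition computation showing $\ddt\cJ(\phi_t) = -\int_X(\Theta_\a-\hat{\Theta})v\sin(\Theta_\a-\hat{\Theta})\,\a^n\leq 0$), then passage to the limit $\lim_t\cJ(\phi_t)=\cJ(\phi_\infty)$ using the uniform $C^0$ bound plus $C^\infty_{\rm loc}$ convergence off $\bigcup_i C_i$ and Bedford--Taylor theory (this is precisely the content of Lemma \ref{conve}, which in turn follows the argument of FLSW14 Lemma~3.2 by exploiting that $\bigcup_i C_i$ is pluripolar and the measures $F_{\phi_\infty}^2$, $\a\wedge F_{\phi_\infty}$, $\hat F\wedge F_{\phi_\infty}$ do not charge pluripolar sets), and finally uniqueness of $\phi_\infty$ modulo constants from the degenerate Monge--Amp\`ere equation together with the translation invariance $\cJ(\phi+c)=\cJ(\phi)$. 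Your flagged worry about mass concentration on $\bigcup_i C_i$ is the right one, and it is resolved exactly as you suspect, by the non-charging property of pluripolar sets in Bedford--Taylor theory; the paper packages this into Lemma \ref{conve}, which is then invoked inside the proof of Theorem \ref{convf}, so that Corollary \ref{Jbound} reads as a three-line consequence.
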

This is also an analogy for the $J$-equation case \cite[Corollary 3.3]{FLSW14}. We expect that this result can be improved for the whole space $\cH$.

Now we briefly explain the organization of this paper. In Section \ref{backnot}, we first recall some background and fix notations that we will use in later arguments. One can find all of these items in \cite{CXY17,CY18,JY17}. Then based on \cite{JY17}, we establish the long time existence result of \eqref{LBMCF} under the hypercritical phase condition for arbitrary dimension in Section \ref{ltimef}. In the later part of the paper, we focus on the case $n=2$. In Section \ref{blowup}, we explore some examples on the blowup of K\"ahler surfaces. Finally, in Section \ref{convflow}, we begin with a brief review of degenerate complex Monge--Amp\`ere equations. Then we prove Theorem \ref{convf} and Corollary \ref{Jbound}.
\section{Background and notation} \label{backnot}
Let $X$ be a compact $n$-dimensional complex manifold with a K\"ahler form $\a$ and a closed real $(1,1)$-form $\hat{F}$.
\subsection{Formulas}
We define a $\C^\ast$-valued function $\zeta$ by
\[
\zeta:=\frac{(\a+\sqrt{-1}F_\phi)^n}{\a^n}.
\] 
Then the function $\zeta$ is related to $\Theta_\a$ as follows
\[
\zeta=\prod_i (1+\sqrt{-1}\lambda_i)=v e^{\sqrt{-1} \Theta_\a},
\]
\[
v:=|\zeta|=\sqrt{\prod_i(1+\lambda_i^2)},
\]
where $\lambda_i$ denote the eigenvalues of $F_\phi$ with respect to $\a$. Next we recall the variation formula of $\Theta_\a$. We define a Hermitian metric $\eta$ on $T^{1,0}X$ by
\[
\eta_{i \bar{j}}=\a_{i \bar{j}}+F_{i \bar{\ell}} \a^{k \bar{\ell}} F_{k \bar{j}}.
\]
Also, for any $f \in C^\infty(X;\R)$, we define
\[
\Delta_\eta f:=\eta^{i \bar{j}} \p_i \p_{\bar{j}} f.
\]
Then the variation of the Lagrangian phase $\Theta_\a$ is given by
\begin{equation} \label{vfLph}
\d \Theta_\a(\d \phi)=\Delta_\eta \d \phi.
\end{equation}
\subsection{Functionals}
We recall the definitions of functionals introduced in \cite{CY18} and study some basic properties of them. We define the {\it Calabi--Yau functional} $CY_\C$ by
\[
CY_\C(\phi):=\frac{1}{n+1} \sum_{j=0}^n \int_X \phi(\a+\sqrt{-1}F_\phi)^j \wedge (\a+\sqrt{-1}\hat{F})^{n-j}, \quad \phi \in \cH.
\]
This is a $\C$-valued functional. The variational formula of $CY_\C$ is given by
\[
\d CY_\C(\d \phi)=\int_X \d \phi (\a+\sqrt{-1} F_\phi)^n.
\]
Also, for $\phi \in \cH$ we set
\[
\cI(\phi):=\Re \big(e^{-\sqrt{-1}(n-1)\frac{\pi}{2}} CY_\C(\phi) \big),
\]
\[
\cJ(\phi):=-\Im \big(e^{-\sqrt{-1}\hat{\Theta}} CY_\C(\phi) \big).
\]
Then the variational formula for $CY_\C$ yields that
\[
\d \cI(\d \phi)=\int_X \d \phi \Re \big( e^{-\sqrt{-1}(n-1)\frac{\pi}{2}} (\a+\sqrt{-1} F_\phi)^n \big),
\]
\[
\d \cJ(\d \phi)=-\int_X \d \phi \Im \big( e^{-\sqrt{-1} \hat{\Theta}} (\a+\sqrt{-1} F_\phi)^n \big).
\]
We use the following simple observations:
\begin{prop} \label{monotonicity}
Assume $\hat{\Theta} \in ((n-1)\frac{\pi}{2},n \frac{\pi}{2})$. For all $\phi \in \cH$ and $c \in \R$, we have
\begin{enumerate}
\item $CY_\C(\phi+c)=CY_\C(\phi)+cZ$.
\item $\cI(\phi+c)=\cI(\phi)+c|Z| \cos \big(\hat{\Theta}-(n-1)\frac{\pi}{2} \big)$.
\item $\cJ(\phi+c)=\cJ(\phi)$.
\end{enumerate}
Moreover, for the line bundle mean curvature flow $\phi_t$ with $\phi_0 \in \cH_{\HC}$, we have
\begin{enumerate}
\setcounter{enumi}{3}
\item $\cI(\phi_t)$ is monotonically decreasing.
\item $\cJ(\phi_t)$ is monotonically decreasing.
\end{enumerate}
\end{prop}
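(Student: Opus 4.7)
The plan is to dispatch (1)--(3) by direct manipulation of the definitions using cohomological invariance of $CY_\C$, and to handle (4)--(5) by computing $\ddt$ along the flow.

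For (1), I would note that $\dd c=0$ forces $F_{\phi+c}=F_\phi$, so the only change in $CY_\C(\phi+c)$ versus $CY_\C(\phi)$ comes from the explicit factor of $\phi$, giving
\[
CY_\C(\phi+c)-CY_\C(\phi)=\frac{c}{n+1}\sum_{j=0}^{n}\int_X(\a+\sqrt{-1}F_\phi)^j\wedge(\a+\sqrt{-1}\hat{F})^{n-j}.
\]
Because $F_\phi-\hat{F}=\dd\phi$ is exact, each of the $n+1$ closed forms $(\a+\sqrt{-1}F_\phi)^j\wedge(\a+\sqrt{-1}\hat{F})^{n-j}$ is cohomologous to $(\a+\sqrt{-1}F_\phi)^n$, so each integral equals $Z$, the sum is $(n+1)Z$, and (1) follows. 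Parts (2) and (3) then follow by applying $\Re(e^{-\sqrt{-1}(n-1)\pi/2}\,\cdot\,)$ respectively $-\Im(e^{-\sqrt{-1}\hat{\Theta}}\,\cdot\,)$ to (1), using $e^{-\sqrt{-1}(n-1)\pi/2}Z=|Z|e^{\sqrt{-1}(\hat{\Theta}-(n-1)\pi/2)}$ and $e^{-\sqrt{-1}\hat{\Theta}}Z=|Z|\in\R_{>0}$; the latter kills the imaginary part, which is precisely what gives the shift invariance in (3).

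For (5), the variational formula and $\dot\phi_t=\Theta_\a-\hat{\Theta}$ yield
\[
\ddt \cJ(\phi_t)=-\int_X(\Theta_\a-\hat{\Theta})\,v\,\sin(\Theta_\a-\hat{\Theta})\,\a^n.
\]
Since the hypercritical condition is preserved along the LBMCF (the content of Section \ref{ltimef}), both $\Theta_\a$ and $\hat{\Theta}$ lie in $((n-1)\pi/2,n\pi/2)$, so $\Theta_\a-\hat{\Theta}\in(-\pi/2,\pi/2)$; on this interval $u\sin u\geq 0$, and combined with $v>0$ the integrand is pointwise non-negative, so $\cJ$ decreases.

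For (4), set $\psi:=\Theta_\a-(n-1)\pi/2$ and $\gamma:=\hat{\Theta}-(n-1)\pi/2$, both in $(0,\pi/2)$. The same type of computation gives
\[
\ddt \cI(\phi_t)=\int_X(\psi-\gamma)\,v\,\cos\psi\,\a^n.
\]
Since $\psi-\gamma$ has no definite pointwise sign, a global constraint is needed; this is the main obstacle. The key input is the topological identity $\int_X v\sin(\Theta_\a-\hat{\Theta})\,\a^n=\Im(e^{-\sqrt{-1}\hat{\Theta}}Z)=0$, which after expanding $\sin(\psi-\gamma)=\sin\psi\cos\gamma-\cos\psi\sin\gamma$ and rewriting $v\sin\psi=(v\cos\psi)\tan\psi$ becomes $\int_X\tan\psi\cdot v\cos\psi\,\a^n=\tan\gamma\cdot W$, where $W:=\int_X v\cos\psi\,\a^n>0$. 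In terms of the probability measure $d\mu:=W^{-1}v\cos\psi\,\a^n$, this reads $\int\tan\psi\,d\mu=\tan\gamma$. Since $\tan$ is convex on $(0,\pi/2)$, Jensen's inequality gives $\tan\big(\int\psi\,d\mu\big)\leq\tan\gamma$, and monotonicity of $\tan$ yields $\int\psi\,d\mu\leq\gamma$. Multiplying by $W$ gives $\ddt\cI=W(\int\psi\,d\mu-\gamma)\leq 0$, proving (4). Recognizing that the cohomological vanishing is precisely a Jensen constraint for the convex function $\tan$ is what makes the argument work.
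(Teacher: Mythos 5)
Your proposal is correct and takes essentially the same route as the paper's proof. The only cosmetic difference is in part (4): the paper applies Jensen's inequality to the \emph{concave} function $\arctan$ on $[0,\infty)$ (writing $\int \arctan(u)\,d\mu \leq \arctan(\int u\,d\mu)$ with $u=\tan(\Theta_\a-(n-1)\pi/2)$), while you apply Jensen to the \emph{convex} function $\tan$ on $(0,\pi/2)$ and then invert; these are the same inequality, since $\tan$ is the increasing inverse of $\arctan$ on the relevant range. The cohomological normalization $\int_X v\sin(\Theta_\a - (n-1)\pi/2)\,\a^n = |Z|\sin(\hat\Theta-(n-1)\pi/2)$ that closes the argument is the same in both.
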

\begin{proof}
The property (1) is trivial. The properties (2) and (3) follow from (1). Now let $\phi_t$ be the LBMCF with $\phi_0 \in \cH_{\HC}$. Then the flow $\phi_t$ exists and lies in $\cH_{\HC}$ for all positive time (see Section \ref{ltimef}). From the variational formula of $\cI$, we compute
\begin{eqnarray*}
\ddt \cI(\phi_t)&=&\int_X (\Theta_\a-\hat{\Theta}) \Re \big( e^{-\sqrt{-1}(n-1)\frac{\pi}{2}} (\a+\sqrt{-1} F_\phi)^n \big)\\
&=& \int_X \bigg( \Theta_\a-(n-1)\frac{\pi}{2} \bigg) \Re \big( e^{-\sqrt{-1}(n-1)\frac{\pi}{2}} (\a+\sqrt{-1} F_\phi)^n \big)\\
&-& \int_X \bigg( \hat{\Theta}-(n-1)\frac{\pi}{2} \bigg) \Re \big( e^{-\sqrt{-1}(n-1)\frac{\pi}{2}} (\a+\sqrt{-1} F_\phi)^n \big) \\
&=& \int_X \bigg( \Theta_\a-(n-1)\frac{\pi}{2} \bigg) \Re \big( e^{-\sqrt{-1}(n-1)\frac{\pi}{2}} (\a+\sqrt{-1} F_\phi)^n \big) \\
&-& \bigg( \hat{\Theta}-(n-1)\frac{\pi}{2} \bigg) |Z| \cos \bigg(\hat{\Theta}-(n-1)\frac{\pi}{2} \bigg).
\end{eqnarray*}
Since $(n-1)\frac{\pi}{2}<\Theta_\a<n\frac{\pi}{2}$, we observe that $\Re \big( e^{-\sqrt{-1}(n-1)\frac{\pi}{2}} (\a+\sqrt{-1} F_\phi)^n \big)$ is a positive measure with volume $|Z| \cos (\hat{\Theta}-(n-1)\frac{\pi}{2})$. So by Jensen's inequality, we compute the first term as
\begin{eqnarray*}
& & \int_X \bigg( \Theta_\a-(n-1)\frac{\pi}{2} \bigg) \Re \big( e^{-\sqrt{-1}(n-1)\frac{\pi}{2}} (\a+\sqrt{-1} F_\phi)^n \big)\\
& & \leq |Z| \cos \bigg(\hat{\Theta}-(n-1)\frac{\pi}{2} \bigg) \cdot \arctan \left[ \int_X \tan \bigg( \Theta_\a-(n-1)\frac{\pi}{2} \bigg) \frac{\Re \big( e^{-\sqrt{-1}(n-1)\frac{\pi}{2}} (\a+\sqrt{-1} F_\phi)^n \big)}{|Z| \cos \big(\hat{\Theta}-(n-1)\frac{\pi}{2} \big)} \right] \\
& & =|Z| \cos \bigg(\hat{\Theta}-(n-1)\frac{\pi}{2} \bigg) \cdot \arctan \left[ \int_X \tan \bigg( \Theta_\a-(n-1)\frac{\pi}{2} \bigg) \frac{v \cos \big( \Theta_\a-(n-1)\frac{\pi}{2} \big)}{|Z| \cos \big(\hat{\Theta}-(n-1)\frac{\pi}{2} \big)} \a^n \right] \\
& & =|Z| \cos \bigg(\hat{\Theta}-(n-1)\frac{\pi}{2} \bigg) \cdot \arctan \left[ \frac{\int_X v \sin \big( \Theta_\a-(n-1)\frac{\pi}{2} \big) \a^n}{|Z| \cos \big(\hat{\Theta}-(n-1)\frac{\pi}{2} \big)} \right].
\end{eqnarray*}
Since
\[
\int_X v \sin \bigg( \Theta_\a-(n-1)\frac{\pi}{2} \bigg) \a^n=\int_X \Im \big( e^{-\sqrt{-1}(n-1) \frac{\pi}{2}}(\a+\sqrt{-1}F)^n \big)=|Z| \sin \bigg( \hat{\Theta}-(n-1)\frac{\pi}{2} \bigg),
\]
we obtain a bound
\[
\int_X \bigg( \Theta_\a-(n-1)\frac{\pi}{2} \bigg) \Re \big( e^{-\sqrt{-1}(n-1)\frac{\pi}{2}} (\a+\sqrt{-1} F_\phi)^n \big) \leq \bigg( \hat{\Theta}-(n-1)\frac{\pi}{2} \bigg) |Z| \cos \bigg(\hat{\Theta}-(n-1)\frac{\pi}{2} \bigg).
\]
This shows $\ddt \cI(\phi_t) \leq 0$, so the property (4) holds. Taking $-\frac{\pi}{2}<\Theta_\a-\hat{\Theta}<\frac{\pi}{2}$ into account, we can compute $\ddt \cJ(\phi_t)$ similarly as
\begin{eqnarray*}
\ddt \cJ(\phi_t) &=& -\int_X (\Theta_\a-\hat{\Theta}) \Im \big( e^{-\sqrt{-1}\hat{\Theta}} (\a+\sqrt{-1} F_\phi)^n \big)\\
&=& -\int_X (\Theta_\a-\hat{\Theta}) \cdot v \sin (\Theta_\a-\hat{\Theta}) \a^n \\
&\leq& 0.
\end{eqnarray*}
Thus we have (5). This completes the proof.
\end{proof}
\subsection{The case $n=2$}
We recall some special properties when $n=2$. Since
\begin{equation} \label{compZ}
Z=[\a]^2-[\hat{F}]^2+2\sqrt{-1} [\a] \cdot [\hat{F}],
\end{equation}
as pointed out in \cite[page 13]{CXY17}, one can check that $Z \in \C \backslash (-\infty,0]$ by using the Hodge index theorem. So the condition $Z \neq 0$ is satisfied automatically (this is not true for $n \geq 3$ as argued in \cite[Lemma 2.1]{CXY17}). Similarly, since
\[
\zeta=1-\lambda_1 \lambda_2+\sqrt{-1}(\lambda_1+\lambda_2),
\]
if $\zeta=\lambda_1+\lambda_2$ vanishes, then the real part $\Re \zeta=1-\lambda_1 \lambda_2=1+\lambda_1^2$ must be positive. This shows that the function $\zeta$ takes values in $\C \backslash (-\infty,0]$, so the branch cuts of the argument of $Z$ and $\zeta$ are specified to $(-\pi, \pi)$. In particular, we have $\Theta_\a=\Arg \zeta$, where $\Arg \colon \C \backslash (-\infty,0] \to (-\pi, \pi)$ denotes the principal argument. Also by \eqref{compZ}, the argument $\hat{\Theta} \in (-\pi, \pi)$ is determined by the formula
\begin{equation} \label{decot}
\cot \hat{\Theta}=\frac{[\a]^2-[\hat{F}]^2}{2 [\a] \cdot [\hat{F}]}.
\end{equation}
\section{Long time existence} \label{ltimef}
In this section, we will show the following:
\begin{thm} \label{ltime}
Let $X$ be a compact complex manifold with a K\"ahler form $\a$ and a closed real $(1,1)$-form $\hat{F}$. Assume $\phi_0 \in \cH_{\HC}$. Then the line bundle mean curvature flow exists for all positive time.
\end{thm}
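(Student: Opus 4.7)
The plan is to combine short-time existence from standard parabolic theory with uniform $C^\infty$ bounds on any finite interval $[0,T]\subset[0,T_{\max})$, which together force $T_{\max}=+\infty$. Short-time existence on some maximal interval $[0,T_{\max})$ follows from the fact that, under the hypercritical assumption, $\phi\mapsto\Theta_\a(\phi)$ is strictly elliptic at $\phi_0$, so \eqref{LBMCF} is a parabolic fully nonlinear equation. The first step is to show that the hypercritical condition is preserved. Differentiating \eqref{LBMCF} in $t$ and applying the variational formula \eqref{vfLph} gives the heat-type identity
\[
\Big(\frac{\p}{\p t}-\Delta_\eta\Big)\Theta_\a=0.
\]
By the parabolic maximum principle, $\inf_X \Theta_\a(\phi_t)$ is non-decreasing and $\sup_X \Theta_\a(\phi_t)$ is non-increasing, so for all $t\in[0,T_{\max})$,
\[
(n-1)\frac{\pi}{2}<\inf_X\Theta_\a(\phi_0)\leq\Theta_\a(\phi_t)\leq\sup_X\Theta_\a(\phi_0)<n\frac{\pi}{2}.
\]
In particular $\phi_t\in\cH_{\HC}$ along the flow and there is a uniform $c>0$ with $\lambda_i(t)>c$ for every eigenvalue of $F_{\phi_t}$ with respect to $\a$.

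The $C^0$ bound is then immediate: $\dot\phi_t=\Theta_\a-\hat\Theta$ is bounded in $L^\infty$ by a constant depending only on $n$ and $\hat\Theta$, and integration in time yields $\|\phi_t-\phi_0\|_{C^0}\leq C(T)$ on $[0,T]$. The main obstacle, as is typical for such flows, is the $C^2$ estimate. Following the strategy of Jacob--Yau \cite{JY17}, I would apply the parabolic maximum principle to a test quantity of the form $H=\log\Tr_\a F_{\phi_t}-A\phi_t$, for a large constant $A$ depending on a lower bound for the bisectional curvature of $\a$ and on the uniform lower bound $c$ on the $\lambda_i$. Under the hypercritical assumption the operator $\phi\mapsto\Theta_\a(\phi)$ is concave in the second derivatives of $\phi$, so the bad gradient and third-order terms produced by $(\p_t-\Delta_\eta)\log\Tr_\a F_{\phi_t}$ can be absorbed into the good terms coming from $-A\Delta_\eta\phi_t$ and from the concavity. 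The maximum principle then gives a uniform upper bound on $\Tr_\a F_{\phi_t}$, hence on all eigenvalues $\lambda_i$, on $[0,T]$.

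Once the $\lambda_i$ are pinched in a compact subinterval of $(0,\infty)$, the flow is uniformly parabolic and the operator is concave, so the Evans--Krylov--Wang theorem \cite{Kry82,Wan12} supplies a uniform $C^{2,\alpha}$ bound, and standard parabolic Schauder bootstrapping yields uniform $C^k$ bounds for every $k$ on $[0,T]$. These bounds allow the flow to be smoothly extended past $T_{\max}$ if $T_{\max}<\infty$, contradicting maximality; hence $T_{\max}=+\infty$.
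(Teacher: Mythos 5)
Your overall architecture matches the paper's: short-time existence by parabolicity, the heat equation $\bigl(\ddt-\Delta_\eta\bigr)\Theta_\a=0$ and the parabolic maximum principle to preserve the hypercritical condition and to pin down $\lambda_i>c$, the $C^0$ bound by integrating $\dot\phi_t=\Theta_\a-\hat\Theta$, then Evans--Krylov--Wang and Schauder bootstrapping. Where you depart, and where the gap lies, is the second-order estimate.

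You propose the test quantity $H=\log\Tr_\a F_{\phi_t}-A\phi_t$ and attribute this to Jacob--Yau, but that is not what they (or the paper) use, and the argument as stated does not go through. The paper instead applies the maximum principle directly to $\log v$, where $v=\sqrt{\prod_i(1+\lambda_i^2)}$, with \emph{no} correction term $-A\phi_t$. The evolution of $\log v$ (cf.\ Lemma~\ref{sderiv}) splits into: (i) a curvature term bounded above by $Cn(n-1)$ using the lower bound on orthogonal bisectional curvature; (ii) the term $-|\nabla F|^2_\eta\le 0$; and (iii) a third-order term which, in normal coordinates, reads
\[
-\sum_{\ell,j,r}\frac{\lambda_j\lambda_\ell\,|\nabla_jF_{\ell\bar r}|^2}{(1+\lambda_j^2)(1+\lambda_r^2)(1+\lambda_\ell^2)}.
\]
The hypercritical assumption forces $\lambda_i>0$, so this last term is automatically non-positive. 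Hence $\bigl(\ddt-\Delta_\eta\bigr)\log v\le Cn(n-1)$ and the maximum principle gives $v\le e^{C(T+1)}$ outright. No absorption by $-A\phi_t$ is needed and, more importantly, none is available in this setting: under the flow
\[
\Delta_\eta\phi_t=\sum_i\frac{\lambda_i-\hat F_{i\bar i}}{1+\lambda_i^2}
\]
is uniformly bounded (each $\frac{\lambda_i}{1+\lambda_i^2}\le\frac12$ and $\eta^{-1}\le\a^{-1}$), so $-A\Delta_\eta\phi_t$ produces no term growing in $\lambda$. This is structurally unlike the complex Monge--Amp\`ere case, where $\Delta_{g_\phi}\phi$ generates $-\Tr_{g_\phi}g_0$ which blows up and does the absorbing; here the linearization $\Delta_\eta$ is \emph{large} rather than small when $\lambda$ is large, so the trick you are importing has no teeth. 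Finally, ``concavity'' is used in the paper only at the Evans--Krylov stage, not to kill bad terms in the $C^2$ computation; what kills the bad third-order term is precisely the positivity of the eigenvalues coming from the hypercritical assumption.

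To repair your argument, replace $\log\Tr_\a F_{\phi_t}-A\phi_t$ by $\log v$ (equivalently $\tfrac12\sum_i\log(1+\lambda_i^2)$), compute its evolution under \eqref{LBMCF}, and observe that the third-order term has a definite sign because $\lambda_i>0$; then the time-dependent bound $v\le e^{C(T+1)}$ gives two-sided control of $F_{\phi_t}$ on $[0,T]$, after which your Evans--Krylov and bootstrap steps are correct and yield $T_{\max}=\infty$.
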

This theorem follows directly from the argument in \cite[Section 5]{JY17} (although it is not mentioned explicitly in their paper). We will give a proof here for the sake of completeness. First we remark that the evolution equation \eqref{LBMCF} is parabolic by \eqref{vfLph}, so the flow exists for a short time $t \in [0,T)$, where $T$ denotes the maximum existence time of the flow. Also we know that
\[
\ddt \Theta_\a=\Delta_\eta \Theta_\a.
\]
Applying the standard maximum principle to this, we obtain the following:
\begin{lem} \label{hpufl}
We have
\[
\inf_X \Theta_\a(\phi_0) \leq \Theta_\a(\phi_t) \leq \sup_X \Theta_\a(\phi_0)
\]
for all $t \in [0,T)$. In particular, the hypercritical phase condition is preserved under the line bundle mean curvature flow.
\end{lem}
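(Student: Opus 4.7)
The plan is to apply the standard parabolic maximum principle directly to the evolution equation $\partial_t \Theta_\a = \Delta_\eta \Theta_\a$ just recalled. The operator $\Delta_\eta$ is genuinely a second-order elliptic operator at each fixed time $t$: since $\eta_{i\bar j} = \a_{i\bar j} + F_{i\bar\ell}\a^{k\bar\ell}F_{k\bar j}$ is positive definite whenever $\a$ is K\"ahler, the inverse $\eta^{i\bar j}$ defines a positive definite Hermitian pairing, and because $\phi_t$ remains smooth for $t \in [0,T)$ the coefficients of $\Delta_\eta$ are smooth in space-time. Thus $u(x,t) := \Theta_\a(\phi_t)(x)$ satisfies a linear parabolic equation with smooth coefficients and no zeroth-order term.

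First I would set $M(t) := \sup_X u(\cdot,t)$ and $m(t) := \inf_X u(\cdot,t)$. At any interior spatial maximum point one has $\Delta_\eta u \leq 0$, so at points where $M(t)$ is attained the evolution gives $\partial_t u \leq 0$; by the standard parabolic maximum principle on the compact manifold $X$, $M(t)$ is non-increasing in $t$. Symmetrically, $m(t)$ is non-decreasing. Evaluating at $t=0$ then yields
\[
\inf_X \Theta_\a(\phi_0) \leq \Theta_\a(\phi_t)(x) \leq \sup_X \Theta_\a(\phi_0) \quad \text{for all } (x,t) \in X \times [0,T),
\]
which is the claimed two-sided bound.

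For the second assertion, suppose $\phi_0 \in \cH_{\HC}$, i.e.\ $\inf_X \Theta_\a(\phi_0) > (n-1)\tfrac{\pi}{2}$. The lower bound just established then gives $\Theta_\a(\phi_t) > (n-1)\tfrac{\pi}{2}$ pointwise on $X$ for every $t \in [0,T)$, so $\phi_t \in \cH_{\HC}$ throughout the flow. There is no substantive obstacle here; the only point to note is that the maximum principle is being applied to a quantity ($\Theta_\a$) that is well-defined as a continuous $\R$-valued function precisely because the flow preserves $F_{\phi_t}$ in the domain of the chosen branch of $\Arg$, which is exactly what the bound itself guarantees a posteriori (and a short bootstrap confirms no wrap-around occurs since $\Theta_\a(\phi_0)$ starts in a fixed interval of length less than $2\pi$ and the bound prevents any point from leaving it).
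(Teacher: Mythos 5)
Your argument is exactly the paper's: derive $\partial_t \Theta_\a = \Delta_\eta \Theta_\a$ from the variational formula \eqref{vfLph} and apply the standard parabolic maximum principle on the compact manifold $X$ to conclude $\sup_X \Theta_\a$ is non-increasing and $\inf_X \Theta_\a$ is non-decreasing. The additional observations (ellipticity of $\Delta_\eta$, branch-consistency of $\Theta_\a$) are correct and merely spell out details the paper leaves implicit.
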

Now we assume $\phi_0 \in \cH_{\HC}$. The above lemma already implies a uniform lower bound $\lambda_i>c$ for some constant $c>0$ (depending only on $\a$ and $\phi_0$). Also combining with \eqref{LBMCF}, we obtain the following:
\begin{lem}
We have $\|\phi_t\|_{C^0} \leq C(T+1)$, where the constant $C>0$ depends only on $\a$ and $\phi_0$.
\end{lem}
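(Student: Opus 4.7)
The plan is to exploit the flow equation $\ddt \phi_t = \Theta_\a(\phi_t) - \hat{\Theta}$ together with the uniform two-sided bound on $\Theta_\a(\phi_t)$ supplied by Lemma \ref{hpufl}. Since $\hat{\Theta}$ is a fixed constant determined by the cohomological data $[\a]$ and $[\hat{F}]$ (indeed by formula \eqref{decot} when $n=2$, and in general by the definition of $e^{\sqrt{-1}\hat{\Theta}}$), while $\inf_X \Theta_\a(\phi_0)$ and $\sup_X \Theta_\a(\phi_0)$ are determined by $\a$ and $\phi_0$, the bound
\[
\Bigl| \ddt \phi_t \Bigr| = |\Theta_\a(\phi_t) - \hat{\Theta}| \leq C
\]
holds pointwise on $X$ for all $t \in [0,T)$, with $C>0$ depending only on $\a$ and $\phi_0$.

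Next I would integrate this differential bound in time. For any $x \in X$ and $t \in [0,T)$,
\[
|\phi_t(x) - \phi_0(x)| \leq \int_0^t \Bigl| \ddt \phi_s(x) \Bigr|\, ds \leq C t.
\]
Combining with the a priori bound $\|\phi_0\|_{C^0} \leq C'$, which again depends only on $\phi_0$, we conclude
\[
\|\phi_t\|_{C^0} \leq \|\phi_0\|_{C^0} + C t \leq C''(T+1)
\]
after renaming constants, which is precisely the asserted estimate.

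There is no essential obstacle here: the statement is a direct consequence of Lemma \ref{hpufl} and the fact that the speed $\Theta_\a - \hat{\Theta}$ is bounded in $L^\infty(X)$ uniformly in $t$. The only point worth checking is that the constant in the $\Theta_\a$ bound is truly independent of $T$, which is immediate from Lemma \ref{hpufl}, since the bound there depends only on the initial datum $\phi_0$ (and the fixed form $\a$ used to compute the Lagrangian phase).
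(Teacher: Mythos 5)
Your argument is correct and is exactly the one the paper intends: Lemma \ref{hpufl} gives a time-independent $L^\infty$ bound on $\Theta_\a(\phi_t)-\hat{\Theta}$, and integrating the flow equation \eqref{LBMCF} in time together with the bound on $\phi_0$ yields the claim. The paper states this lemma without writing out the (one-line) proof, so there is nothing further to compare.
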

In order to obtain an upper bound of $\lambda_i$, we need to compute the evolution equation of $v$ as in the following lemma:
\begin{lem} \label{sderiv}
We have $v \leq e^{C(T+1)}$, where the constant $C>0$ depends only on $\phi_0$ and a lower bound of the orthogonal bisectional curvature of $\a$.
\end{lem}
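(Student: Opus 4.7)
The plan is to apply the parabolic maximum principle to $\log v$, mirroring the argument in \cite[Section 5]{JY17}. The main task is to derive a pointwise differential inequality of the form $(\p_t - \Delta_\eta)\log v \leq C_1$, with $C_1$ depending only on $\phi_0$ and a lower bound on the orthogonal bisectional curvature of $\a$.

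First I would compute the evolution equation of $\log v$ under the LBMCF. Since $v^2 = \prod_i(1+\lambda_i^2) = \det(\a_{i\bar{j}})^{-1}\det(\eta_{i\bar{j}})$, the time derivative is
\[
\p_t \log v = \tfrac{1}{2}\eta^{j\bar{i}}\p_t \eta_{i\bar{j}},
\]
where $\p_t \eta_{i\bar{j}}$ is computed from $\p_t F_{i\bar{j}} = \p_i\p_{\bar{j}}\Theta_\a$ and the definition of $\eta$. Working at a point in normal coordinates for $\a$ in which $F$ is diagonal with eigenvalues $\lambda_i$, so that $\eta_{i\bar{j}} = (1+\lambda_i^2)\d_{ij}$, and then expanding $\Delta_\eta\log v = \eta^{k\bar{\ell}}\p_k\p_{\bar{\ell}}\log v$ and commuting covariant derivatives (picking up curvature contributions from $\a$), one arrives at an identity schematically of the form
\[
(\p_t - \Delta_\eta)\log v = -\mathcal{G} + \mathcal{R},
\]
where $\mathcal{G} \geq 0$ is a quadratic gradient term in $\nabla F$, and $\mathcal{R}$ is a combination of components of the curvature tensor $\Rm(\a)$ weighted by rational functions of the $\lambda_i$.

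The key observation is the structure of $\mathcal{R}$: only components of the form $R_{i\bar{i}j\bar{j}}$ with $i\neq j$, namely the orthogonal bisectional curvatures of $\a$, appear, and their coefficients are rational functions of the $\lambda_i$ that are uniformly bounded by virtue of Lemma \ref{hpufl}. Indeed, the hypercritical phase condition yields $\lambda_i \geq c > 0$, hence $(1+\lambda_i^2)^{-1}$ and analogous factors are controlled. A lower bound $R_{i\bar{i}j\bar{j}} \geq -K$ therefore translates into a pointwise upper bound
\[
\mathcal{R} \leq C_1,
\]
for a constant $C_1$ depending only on $\phi_0$ (through $c$) and $K$. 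Discarding the good gradient term, this gives the inequality
\[
(\p_t - \Delta_\eta)\log v \leq C_1.
\]

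The final step is a direct application of the parabolic maximum principle to $\log v - C_1 t$ on $X \times [0,T)$, which yields
\[
\sup_X \log v(\cdot,t) \leq \sup_X \log v(\cdot,0) + C_1 t,
\]
and absorbing the initial bound into a single constant gives $v \leq e^{C(T+1)}$. The hard part is the evolution-equation computation itself: the eigenvalues $\lambda_i$ are not globally smooth and may coincide, so the derivation must be carried out via $\Tr$ formulas for $\log\det\eta$ rather than eigenvalue-by-eigenvalue, and one must verify that the surviving curvature components really are the orthogonal bisectional ones. Once that bookkeeping is done, the use of the hypercritical bound to pin down the sign and size of all rational coefficients is routine.
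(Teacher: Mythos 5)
Your plan is the paper's plan: use the known evolution inequality for $\log v$ together with the parabolic maximum principle, after splitting the right-hand side into a good gradient piece and a curvature piece. But you mis-locate where the hypercritical assumption actually enters, and that mis-location hides the one step that genuinely requires an argument.

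The evolution formula from \cite{JY17} has two gradient terms and one curvature commutator term. The term $-|\nabla F|_\eta^2$ is always non-positive, but the other quadratic term, which in normal coordinates reads
\[
-\sum_{\ell,j,r}\frac{\lambda_j\lambda_\ell\,|\nabla_j F_{\ell\bar r}|^2}{(1+\lambda_j^2)(1+\lambda_r^2)(1+\lambda_\ell^2)},
\]
has a definite sign \emph{only} because the hypercritical phase condition (via Lemma \ref{hpufl}) forces $\lambda_i>0$; otherwise the products $\lambda_j\lambda_\ell$ can be negative and this term cannot be discarded. You assert $\mathcal{G}\geq 0$ without comment, which is precisely the point that needs $\phi_0\in\cH_{\HC}$. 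Conversely, the curvature term is
\[
-\sum_{j<\ell}\frac{(\lambda_j-\lambda_\ell)^2\,R_{\ell\bar j j\bar\ell}}{(1+\lambda_j^2)(1+\lambda_\ell^2)},
\]
and the rational coefficient here is bounded by $2$ for \emph{all} real $\lambda_i$ (using $(\lambda_j-\lambda_\ell)^2\leq 2(\lambda_j^2+\lambda_\ell^2)$ and $\lambda_p^2/(1+\lambda_p^2)\leq1$); no lower bound $\lambda_i\geq c>0$ is needed, contrary to your appeal to Lemma \ref{hpufl} at that step. So the hypercritical condition is a sign constraint for the gradient term, not a size constraint for the curvature coefficients. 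Once these two observations are put in the right places, the rest of your argument — the maximum principle applied to $\log v - C_1 t$, absorbing the initial bound — matches the paper.
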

\begin{proof}
First we recall the evolution equation of $v$ (\cf \cite[page 887]{JY17}):
\begin{eqnarray*}
\bigg( \ddt-\Delta_\eta \bigg) \log v&=&\eta^{j \bar{k}} F_{p \bar{k}} g^{p \bar{q}} \eta^{\ell \bar{m}} [\nabla_{\bar{q}}, \nabla_\ell]F_{j \bar{m}}-|\nabla F|_\eta^2 \\
&-& \eta^{j \bar{k}} F_{p \bar{k}} g^{p \bar{q}} \eta^{\ell \bar{s}} F_{r \bar{s}} g^{r \bar{b}} \nabla_{\bar{q}} F_{a \bar{b}} \eta^{a \bar{m}} \nabla_\ell F_{j \bar{m}},
\end{eqnarray*}
where $\nabla$ denotes the covariant derivative with respect to $\a$. In normal coordinates, the last term is given by
\[
-\sum_{\ell, j, r} \frac{\lambda_j \lambda_\ell |\nabla_j F_{\ell \bar{r}}|^2}{(1+\lambda_j^2)(1+\lambda_r^2)(1+\lambda_\ell^2)},
\]
which is non-positive since the eigenvalues $\lambda_i$ are all positive. Also, using a lower bound of the orthogonal bisectional curvature of $\a$ and the symmetry $R_{\ell \bar{j} j \bar{\ell}}=R_{j \bar{\ell} \ell \bar{j}}$, the first term is computed as
\begin{eqnarray*}
\eta^{j \bar{k}} F_{p \bar{k}} g^{p \bar{q}} \eta^{\ell \bar{m}} [\nabla_{\bar{q}}, \nabla_\ell]F_{j \bar{m}} &=& \eta^{j \bar{j}} F_{j \bar{j}} \eta^{\ell \bar{\ell}} R_{\ell \bar{j} j}{}^\ell F_{\ell \bar{\ell}}-\eta^{j \bar{j}} F_{j \bar{j}} \eta^{\ell \bar{\ell}} R_{\ell \bar{j}}{}^{\bar{j}}{}_{\bar{\ell}} F_{j \bar{j}}\\
&=& \sum_{j,\ell} \frac{(\lambda_j \lambda_\ell-\lambda_j^2) R_{\ell \bar{j} j \bar{\ell}}}{(1+\lambda_j^2)(1+\lambda_\ell^2)}\\
&=& \sum_{j<\ell} \frac{(2 \lambda_j \lambda_\ell-\lambda_j^2-\lambda_\ell^2)R_{\ell \bar{j} j \bar{\ell}}}{(1+\lambda_j^2)(1+\lambda_\ell^2)}\\
&=& -\sum_{j<\ell} \frac{(\lambda_j-\lambda_\ell)^2 R_{\ell \bar{j} j \bar{\ell}}}{(1+\lambda_j^2)(1+\lambda_\ell^2)}\\
&\leq & C \sum_{j<\ell} \frac{(\lambda_j-\lambda_\ell)^2}{(1+\lambda_j^2)(1+\lambda_\ell^2)}\\
&\leq & 2C \sum_{j<\ell} \frac{\lambda_j^2+\lambda_\ell^2}{(1+\lambda_j^2)(1+\lambda_\ell^2)}\\
&\leq& Cn(n-1).
\end{eqnarray*}
Hence we have
\[
\bigg( \ddt-\Delta_\eta \bigg) \log v \leq Cn(n-1).
\]
Applying the maximum principle, we obtain the desired result.
\end{proof}
The above lemma implies that $F$ has a two-sided bound
\begin{equation} \label{twosideF}
C_T^{-1} \a<F_{\phi_t}<C_T \a
\end{equation}
for some constant $C_T>0$ which depends only on $\a$, $\phi_0$ and $T$. Now we recall the argument in \cite[page 889]{JY17}. From \eqref{twosideF}, we find that the operator $\Theta_\a$ is uniformly elliptic and concave. So we can apply the Evans--Krylov estimate \cite{Kry82,Wan12} to obtain $C^{2,\g}$-bounds of $\phi_t$ for some $\g \in (0,1)$. Then the standard bootstrapping argument shows that all the higher order derivatives of $\phi_t$ is uniformly controlled on $[0,T)$, and hence we conclude $T=\infty$. This completes the proof of Theorem \ref{ltime}.
\section{Examples: the blowup of K\"ahler surfaces} \label{blowup}
Let $X$ be a compact complex surface with a K\"ahler form $\a$. For $m>1$, we consider a pair $([\a], m[\a])$. One can easily check that this pair admits a trivial solution $(\a, m\a)$ to \eqref{dHYM} whose associated constant argument $\hat{\Theta}>\frac{\pi}{2}$ is determined by
\[
\cot \hat{\Theta}=\frac{[\a]^2-m^2[\a]^2}{2 [\a] \cdot m[\a]}=\frac{1-m^2}{2m}.
\]
Let $z \in X$ be a point, and denote by $\pi \colon \Bl_z X \to X$ the blowup of $X$ at $z$ with exceptional divisor $E$. Now we will construct a pair of K\"ahler classes on $\Bl_z X$ satisfying the assumption in Theorem \ref{convf} as a small deformation of $(\pi^\ast [\a], m \pi^\ast [\a])$. We note that for small numbers $s,t>0$, $(\pi^\ast [\a]-s[E], m \pi^\ast [\a]-t[E])$ defines a pair of K\"ahler classes. Let $\hat{\Theta}(s,t)$ be the associated constant argument of this pair. Since
\[
[E]^2=-1, \quad \pi^\ast [\a] \cdot [E]=0,
\]
the argument $\hat{\Theta}(s,t)$ is given by
\[
\cot \hat{\Theta}(s,t)=\frac{L-s^2-(m^2 L-t^2)}{2(mL-st)}=\frac{(1-m^2)L-s^2+t^2}{2(mL-st)},
\]
where we set $L:=[\a]^2>0$. We try to find $(s,t)$ such that
\[
\cot \hat{\Theta}(s,t) (\pi^\ast [\a]-s[E])+m \pi^\ast [\a]-t[E] \in \R_{>0} \pi^\ast [\a].
\]
This condition is clearly satisfied when $(s,t)=(0,0)$ since
\[
\cot \hat{\Theta}+m=\frac{1+m^2}{2m}>0.
\]
For general $(s,t)$, this holds if and only if
\[
\begin{cases}
\cot \hat{\Theta}(s,t)+m>0 \\
\cot \hat{\Theta}(s,t)s+t=0.
\end{cases}
\]
The first condition is open with respect to $(s,t)$. The second condition is equivalent to $G(s,t)=0$, where
\[
G(s,t):=s[(1-m^2)L-s^2+t^2]+2t(mL-st).
\]
Since $\frac{\p G}{\p s} \big|_{(s,t)=(0,0)}=(1-m^2)L<0$ and $\frac{\p G}{\p t} \big|_{(s,t)=(0,0)}=2mL>0$, applying the implicit function theorem, we know that $t=t(s)$ is a smooth function of $s$ near $(0,0)$ with $G(s,t(s))=0$ and $\frac{dt}{ds}|_{s=0}=\frac{m^2-1}{2m}>0$. So we obtain the pair of K\"ahler class $(\pi^\ast [\a]-s[E], m \pi^\ast [\a]-t(s)[E])$ on $\Bl_z X$ for sufficiently small $s>0$ whose associated form $\cot \hat{\Theta} (s,t(s)) (\pi^\ast [\a]-s[E])+m \pi^\ast [\a]-t(s)[E]$ lies in the ray $\R_{>0} \pi^\ast [\a]$ and is clearly semipositive. Also we note that $\hat{\Theta} (s,t(s))>\frac{\pi}{2}$ as long as $s>0$ is small. We fix $s>0$ and set $\hat{\Psi}:=\hat{\Theta}(s,t(s))$.

In general, it seems to be hard to check whether there exists a K\"ahler metric satisfying the hypercritical phase condition in a given cohomology class even if we know that the associated constant argument is greater than $\frac{\pi}{2}$. However, we can check that the above example $(\Upsilon, \Gamma):=(\pi^\ast [\a]-s[E], m \pi^\ast [\a]-t(s)[E])$ does include such a metric as follows: first we note that $\Upsilon$ is not proportional to $\Gamma$ (since, if so, the pair $(\Upsilon, \Gamma)$ must admit a trivial solution to \eqref{dHYM}). We set
\[
M:=\Upsilon^2, \quad N:=\Gamma^2, \quad S:=\Upsilon \cdot \Gamma
\]
for simplifying notations. For a small $\d>0$, we consider the pair of K\"ahler classes $(\Upsilon,(1-\d)\Gamma)$ with the associated constant argument $\hat{\Psi}(\d)$ given by
\[
\cot \hat{\Psi}(\d)=\frac{M-(1-\d)^2N}{2(1-\d)S}.
\]
Let $\cK$ be a subcone of the K\"ahler cone of $\Bl_z X$ cut out by a subspace $W \subset H^{1,1}(\Bl_z X;\R)$ spanned by $\Upsilon$ and $\Gamma$. Then the class $\cot \hat{\Psi}(\d) \Upsilon+(1-\d)\Gamma$ ($\d>0$) defines a rational curve in $W$. We compute the velocity vector of this curve at $\d=0$ as
\[
\frac{d}{d \d}\big( \cot \hat{\Psi}(\d) \Upsilon+(1-\d)\Gamma \big)|_{\d=0}=\frac{d}{d \d} \cot \hat{\Psi}(\d)|_{\d=0} \Upsilon-\Gamma.
\]
Since
\[
\frac{d}{d \d} \cot \hat{\Psi}(\d)|_{\d=0}=\frac{M+N}{2S}>0, \quad \cot \hat{\Psi}=\frac{M-N}{2S}<0, \quad \bigg| \frac{M-N}{2S} \bigg|<\bigg| \frac{M+N}{2S} \bigg|,
\]
one can easily see that this is an inward vector of $\cK$ at $\cot \hat{\Psi} \Upsilon+\Gamma \in \p \cK$. Thus the class $\cot \hat{\Psi}(\d) \Upsilon+(1-\d)\Gamma$ lies in $\cK$ for sufficiently small $\d>0$, and hence is K\"ahler. Applying the criterion \eqref{JYcriterion}, we know that for any fixed K\"ahler form $\b \in \Upsilon$, the pair $(\Upsilon,(1-\d)\Gamma)$ admits a solution to \eqref{dHYM} with respect to $\b$ for sufficiently small $\d>0$, say $F_{\dHYM,\d} \in (1-\d)\Gamma$. Take any smooth representative $\tilde{F} \in \Gamma$, and consider $F_\d:=F_{\dHYM,\d}+\d \tilde{F} \in \Gamma$. Let $\lambda_{\dHYM,\d,i}$ (resp. $\lambda_{\d,i}$) denotes the eigenvalue of $F_{\dHYM,\d}$ (resp. $F_\d$) in decreasing order with respect to $\b$. Now we take a local coordinates, and regard the endomorphisms $(F_\d)_{i \bar{j}} \a^{k \bar{j}}$, $(F_{\dHYM,\d})_{i \bar{j}} \a^{k \bar{j}}$, $\tilde{F}_{i \bar{j}} \a^{k \bar{j}}$ as matrix-valued functions on it.

Since the map $A \mapsto (\lambda_1(A), \lambda_2(A))$ from Hermitian $2 \times 2$ matrices to the eigenvalues in decreasing order is Lipschitz and $\tilde{F}$ is a fixed form, we observe that
\[
\|\lambda_{\d,i}-\lambda_{\dHYM,\d,i}\|_{C^0}=O(\d), \quad i=1,2
\]
as $\d \to 0$. Combining with the Lipschitz continuity of $\arctan (\cdot)$, we have an estimate for the corresponding Lagrangian phase functions
\[
\|\Theta_\b(F_\d)-\hat{\Psi}(\d) \|_{C^0}=O(\d)
\]
as $\d \to 0$. Since the constant argument $\hat{\Psi}(\d)$ converges to $\hat{\Psi}>\frac{\pi}{2}$ as $\d \to 0$, we conclude that $\Theta_\b(F_\d)$ is hypercritical for sufficiently small $\d>0$. Summarizing the above, we obtain the following:
\begin{thm} \label{exambl}
Let $X$ be a compact complex surface with a K\"ahler form $\a$ and $\pi \colon \Bl_z X \to X$ the blowup of $X$ at a point $z \in X$ with exceptional divisor $E$. Then for any $m>1$, a small deformation of the pair $(\pi^\ast [\a], m \pi^\ast [\a])$ gives rise to a pair of K\"ahler classes $(\Upsilon, \Gamma):=(\pi^\ast [\a]-s[E], m \pi^\ast [\a]-t(s)[E])$ with the associated constant argument $\hat{\Psi}=\hat{\Theta}(s,t(s))>\frac{\pi}{2}$ such that $\cot \hat{\Psi} \Upsilon+\Gamma$ is semipositive, but not K\"ahler. Moreover, for any K\"ahler form $\b \in \Upsilon$ and smooth representative $\tilde{F} \in \Gamma$, we can construct a family of K\"ahler metrics $F_\d=F_{\dHYM,\d}+\d \tilde{F} \in \Gamma$ whose Lagrangian phase $\Theta_\b(F_\d)$ satisfies the hypercritical phase condition.
\end{thm}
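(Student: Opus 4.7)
The plan is to split the theorem into two parts. First I would construct the pair of K\"ahler classes $(\Upsilon,\Gamma)$ via the implicit function theorem applied to the locus on which $\cot\hat\Theta(s,t)\,\Upsilon+\Gamma$ lies in the ray $\R_{>0}\pi^\ast[\a]$, and verify that the limiting class is represented by the semipositive but non-K\"ahler form $\pi^\ast\a$. Second, I would apply the Jacob--Yau solvability criterion \eqref{JYcriterion} to a rescaled problem $(\Upsilon,(1-\d)\Gamma)$ and perturb the resulting dHYM solution to obtain hypercritical representatives of $\Gamma$.

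For the first part, set $L:=[\a]^2>0$ and note that $[E]^2=-1$, $\pi^\ast[\a]\cdot[E]=0$ give
\[
\cot\hat\Theta(s,t)=\frac{(1-m^2)L-s^2+t^2}{2(mL-st)}.
\]
The proportionality condition $\cot\hat\Theta(s,t)(\pi^\ast[\a]-s[E])+m\pi^\ast[\a]-t[E]\in\R_{>0}\pi^\ast[\a]$ reduces to $\cot\hat\Theta(s,t)\,s+t=0$, which I write as $G(s,t)=0$. Since $\p_t G(0,0)=2mL>0$, the implicit function theorem yields a smooth branch $t=t(s)$ with $t(0)=0$ and $t'(0)=(m^2-1)/(2m)>0$; for small $s>0$ the classes $\Upsilon=\pi^\ast[\a]-s[E]$ and $\Gamma=m\pi^\ast[\a]-t(s)[E]$ are K\"ahler, being small perturbations of K\"ahler classes. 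The sign $(1-m^2)L<0$ forces $\cot\hat\Psi<0$, hence $\hat\Psi>\pi/2$, and the limit class $\cot\hat\Psi\,\Upsilon+\Gamma$ equals a positive multiple of $\pi^\ast[\a]$, represented by the semipositive form $\pi^\ast\a$, which degenerates along $E$ and so is not K\"ahler.

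For the second part, I would consider the slanted family $(\Upsilon,(1-\d)\Gamma)$ with associated constant argument $\hat\Psi(\d)$, and study the curve $\d\mapsto\cot\hat\Psi(\d)\Upsilon+(1-\d)\Gamma$ in the two-dimensional subcone $\cK$ of the K\"ahler cone of $\Bl_z X$ cut out by $\mathrm{span}(\Upsilon,\Gamma)$. A direct differentiation, with $M=\Upsilon^2$, $N=\Gamma^2$, $S=\Upsilon\cdot\Gamma$, yields the velocity
\[
\frac{d}{d\d}\bigl(\cot\hat\Psi(\d)\Upsilon+(1-\d)\Gamma\bigr)\bigg|_{\d=0}=\frac{M+N}{2S}\Upsilon-\Gamma,
\]
and the inequalities $(M+N)/(2S)>0$, $\cot\hat\Psi=(M-N)/(2S)<0$, together with $|M-N|<M+N$ (from $M,N>0$), show that this tangent is an inward vector of $\cK$ at the boundary point $\cot\hat\Psi\,\Upsilon+\Gamma\in\p\cK$. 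Hence $\cot\hat\Psi(\d)\Upsilon+(1-\d)\Gamma$ is K\"ahler for small $\d>0$, and by \eqref{JYcriterion} the pair $(\Upsilon,(1-\d)\Gamma)$ admits a smooth dHYM solution $F_{\dHYM,\d}\in(1-\d)\Gamma$ with respect to any fixed $\b\in\Upsilon$. Setting $F_\d:=F_{\dHYM,\d}+\d\tilde F\in\Gamma$, Lipschitz continuity of the eigenvalue map on Hermitian $2\times 2$ matrices together with that of $\arctan$ gives $\|\Theta_\b(F_\d)-\hat\Psi(\d)\|_{C^0}=O(\d)$; since $\hat\Psi(\d)\to\hat\Psi>\pi/2$ as $\d\to 0$, for small $\d>0$ one has $\Theta_\b(F_\d)>\pi/2$, which forces both eigenvalues of $F_\d$ relative to $\b$ to be positive and hence $F_\d$ to be K\"ahler.

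The main obstacle is the transversality step, \ie showing that the velocity $\tfrac{M+N}{2S}\Upsilon-\Gamma$ actually points into the interior of $\cK$ from the boundary point $c\,\pi^\ast[\a]\in\p\cK$. This is where the sign inequalities among $M,N,S$ --- which trace back to the Hodge index theorem applied to the K\"ahler classes $\Upsilon,\Gamma$ and to the choice $m>1$ (which guarantees $\cot\hat\Psi<0$) --- are used essentially.
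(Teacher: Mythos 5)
Your proposal matches the paper's proof essentially step for step: the implicit-function-theorem construction of $t(s)$ from $G(s,t)=0$, the identification of the limiting class as a positive multiple of $\pi^\ast[\a]$ (semipositive but degenerate along $E$), the velocity computation $\frac{M+N}{2S}\Upsilon-\Gamma$ for the slanted family $(\Upsilon,(1-\d)\Gamma)$ together with the inwardness argument at the boundary of $\cK$, the appeal to the Jacob--Yau criterion \eqref{JYcriterion}, and the Lipschitz perturbation $F_\d=F_{\dHYM,\d}+\d\tilde F$ giving $\|\Theta_\b(F_\d)-\hat\Psi(\d)\|_{C^0}=O(\d)$. One minor remark: the inequalities $M,N,S>0$ and $|M-N|<M+N$ come simply from $\Upsilon,\Gamma$ being K\"ahler classes on a surface (positivity of squares and intersection products) rather than from the Hodge index theorem, which plays no role here.
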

\section{Convergence of the flow} \label{convflow}
Let $X$ be a compact complex surface with a K\"ahler form $\a$ and a closed real $(1,1)$-form $\hat{F}$ such that the associated constant argument $\hat{\Theta}$ satisfies $\hat{\Theta}>\frac{\pi}{2}$ and $\hat{\Omega}:=\cot \hat{\Theta} \a+\hat{F} \geq 0$.
\subsection{Degenerate complex Monge--Amp\`ere equation}
 We consider the following degenerate complex Monge--Amp\`ere equation
\begin{equation} \label{dgMA}
(\hat{\Omega}+\dd \psi)^2=(1+\cot^2 \hat{\Theta}) \a^2.
\end{equation}
By \eqref{decot}, one can easily check that the volume of $\hat{\Omega}$ is given by $(1+\cot^2 \hat{\Theta})[\a]^2$ as follows:
\begin{eqnarray*}
[\hat{\Omega}]^2 &=& (1+\cot^2 \hat{\Theta}) [\a]^2+2 \cot \hat{\Theta} [\a] \cdot [\hat{F}]+[\hat{F}]^2-[\a]^2 \\
&=& (1+\cot^2 \hat{\Theta}) [\a]^2>0.
\end{eqnarray*}
Also we have
\begin{eqnarray*}
[\hat{\Omega}] \cdot [\hat{F}] &=& \cot \hat{\Theta} [\a] \cdot [\hat{F}]+[\hat{F}]^2\\
&=& \frac{1}{2}([\a]^2+[\hat{F}]^2)>0.
\end{eqnarray*}
Thus we can apply \cite[Proposition 4.5]{SW08} to see that there exists a finite number $N \geq 0$, irreducible curves $C_i$ with $[C_i]^2<0$ on $X$ and numbers $a_i>0$ such that $[\hat{\Omega}]-\sum_{i=1}^N a_i [C_i]$ is a K\"ahler class. In particular, there exist Hermitian metrics $h_i$ on the holomorphic line bundles associated to $C_i$ such that
\[
\hat{\Omega}-\sum_{i=1}^N a_i R_{h_i}>0,
\]
where $R_{h_i}:=-\sqrt{-1} \p \bp \log |s_i|_{h_i}^2$ denotes the curvature of $h_i$. From the same argument as in \cite[Section 2]{FLSW14} (based on \cite{EGZ09} and \cite{Tsu88}), we obtain the following:
\begin{thm} \label{solvdgMA}
There exists a unique bounded $\hat{\Omega}$-PSH function $\psi$ on $X$ (up to additive constant) with
\[
(\hat{\Omega}+\dd \psi)^2=(1+\cot^2 \hat{\Theta}) \a^2
\]
in the sense of currents. Moreover, $\psi$ is smooth away from $\bigcup_i C_i$.
\end{thm}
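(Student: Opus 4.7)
The plan is to produce $\psi$ by a viscosity/approximation scheme in the style of Tsuji, combining Kołodziej's $L^\infty$ estimate (equivalently \cite{EGZ09}) for $C^0$ bounds with the cohomological decomposition $[\hat{\Omega}]-\sum a_i[C_i]=$ K\"ahler for local higher regularity. Fix a background K\"ahler form $\omega_0$ on $X$ and, for $\epsilon\in(0,1]$, set $\hat{\Omega}_\epsilon:=\hat{\Omega}+\epsilon\omega_0>0$ together with the normalizing constant $c_\epsilon:=[\hat{\Omega}_\epsilon]^2/((1+\cot^2\hat{\Theta})[\alpha]^2)\to 1$. By Yau's theorem, there exists a unique smooth $\hat{\Omega}_\epsilon$-PSH function $\psi_\epsilon$ with $\sup_X\psi_\epsilon=0$ solving
\[
(\hat{\Omega}_\epsilon+\dd\psi_\epsilon)^2=c_\epsilon(1+\cot^2\hat{\Theta})\,\alpha^2.
\]

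Next I would establish a uniform $L^\infty$ bound $\|\psi_\epsilon\|_{C^0(X)}\leq C$ by Kołodziej's estimate; this applies because the right-hand side is a fixed smooth volume form and $[\hat{\Omega}]^2>0$ was already verified. For the local higher-order estimates I would invoke the Tsuji trick using the sections $s_i$ cutting out $C_i$ and the metrics $h_i$ from the discussion preceding the theorem, which satisfy $\hat{\Omega}-\sum_i a_iR_{h_i}\geq c_0\omega_0$ for some $c_0>0$. Consider the auxiliary function
\[
\tilde{\psi}_\epsilon:=\psi_\epsilon+\delta\sum_i a_i\log|s_i|^2_{h_i}, \quad \delta>0\text{ small}.
\]
Applying the Aubin--Yau second-order maximum principle to $e^{-A\tilde{\psi}_\epsilon}\mathrm{tr}_\alpha(\hat{\Omega}_\epsilon+\dd\psi_\epsilon)$, the quasipositivity of $\hat{\Omega}-\delta\sum a_iR_{h_i}$ ensures the maximum cannot escape to $\bigcup_i C_i$ (where $\tilde{\psi}_\epsilon\to-\infty$), yielding a Laplacian bound on any compact $K\Subset X\setminus\bigcup_i C_i$ independent of $\epsilon$. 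Evans--Krylov \cite{Kry82} and bootstrapping then give uniform $C^k_{\mathrm{loc}}(X\setminus\bigcup_i C_i)$ estimates.

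Passing to a subsequence, $\psi_\epsilon$ converges in $C^\infty_{\mathrm{loc}}(X\setminus\bigcup_i C_i)$ and weakly in $L^1(X)$ to a bounded $\hat{\Omega}$-PSH function $\psi$, smooth off $\bigcup_i C_i$, with $\sup_X\psi=0$. Continuity of the Bedford--Taylor Monge--Amp\`ere operator on uniformly bounded PSH functions (applied after a standard diagonal monotonization, taking envelopes of the $\psi_\epsilon$ translated by their $C^0$ bounds) shows that $\psi$ solves the degenerate equation in the sense of currents on all of $X$; total masses match since both sides integrate to $[\hat{\Omega}]^2$. Uniqueness up to an additive constant follows from the Bedford--Taylor comparison principle for bounded PSH functions on the K\"ahler class $[\hat{\Omega}]$, exactly as in \cite[Section 2]{FLSW14} and \cite{EGZ09}.

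The main obstacle is the local second-order estimate in the Tsuji step: the degeneracy of $\hat{\Omega}$ along $\bigcup_i C_i$ prevents any global $C^2$ bound, and one must absorb the loss of positivity into the auxiliary term $\delta\sum a_i\log|s_i|^2_{h_i}$. It is precisely the cohomological decomposition from \cite[Proposition 4.5]{SW08}, confining negativity to curves of negative self-intersection, that makes this absorption possible; the rest of the argument is a direct adaptation of the K\"ahler-Einstein/$J$-flow degenerate Monge--Amp\`ere machinery.
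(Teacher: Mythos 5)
Your proof follows exactly the route the paper takes: the paper itself gives no argument beyond the sentence preceding the theorem, namely ``From the same argument as in \cite[Section 2]{FLSW14} (based on \cite{EGZ09} and \cite{Tsu88}), we obtain the following,'' and your sketch is a faithful unwinding of what those references do in this setting --- regularize by $\hat{\Omega}+\epsilon\omega_0$, solve by Yau, get a uniform $L^\infty$ bound via Ko\l{}odziej/EGZ (valid since $[\hat{\Omega}]$ is nef and big, as $[\hat{\Omega}]^2>0$ was checked), use Tsuji's trick with the barrier $\sum_i a_i\log|s_i|_{h_i}^2$ built from the \cite[Proposition~4.5]{SW08} decomposition to get local Laplacian bounds off $\bigcup_i C_i$, apply Evans--Krylov and bootstrap, pass to a limit, and conclude uniqueness from the comparison principle in \cite{EGZ09}/\cite[Section 2]{FLSW14}.
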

As argued in the proof of \cite[Theorem 1.2]{JY17}, the solution $\psi$ satisfies \eqref{dHYM} on $X \backslash \bigcup_i C_i$ and vice versa (however, in order to get the solution on the whole space $X$, the strict positivity of $\hat{\Omega}$ is needed).
\subsection{$C^0$-estimate}
In what follows, we assume $\phi_0 \in \cH_{\HC}$, and consider the LBMCF $\phi_t$ ($t \in [0,\infty)$) starting from $\phi_0$. In later argument, we denote constants depending only on $\a$ and $\phi_0$ by the same $c$ or $C$, but it changes from line to line.
\begin{lem} \label{zoest}
There exists a uniform constant $C>0$ such that $\|\phi_t\|_{C^0} \leq C$.
\end{lem}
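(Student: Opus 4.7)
The plan is to bound $\phi_t$ uniformly by comparing it with the bounded solution $\psi$ from Theorem \ref{solvdgMA} via a Tsuji-type regularization of the parabolic maximum principle. On $X \backslash \bigcup_i C_i$ the function $\psi$ is smooth, and since \eqref{dgMA} is equivalent to the dHYM equation $\Theta_\a(F_\psi) = \hat{\Theta}$ on the complement of the singular set (where $F_\psi := \hat{F} + \dd \psi$), $\psi$ serves as a (singular) stationary solution of the LBMCF against which to compare $\phi_t$.

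For the upper bound, pick Hermitian metrics $h_i$ on $\cO(C_i)$ with $|s_i|_{h_i} \leq 1$ (and with $\hat{\Omega} - \sum_i a_i R_{h_i}$ K\"ahler, as used in Theorem \ref{solvdgMA}), and for small $\delta > 0$ set
\[
Q_\delta(x,t) := \phi_t(x) - \psi(x) + \delta \sum_i \log|s_i(x)|_{h_i}^2.
\]
Since the barrier term is non-positive and blows to $-\infty$ along $\bigcup_i C_i$, the spatial supremum $M_\delta(t) := \sup_X Q_\delta(\cdot, t)$ is attained at some $x_t \in X \backslash \bigcup_i C_i$ where $\psi$ is smooth. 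At an interior space-time maximum the inequality $\dd Q_\delta|_{x_t} \leq 0$ gives
\[
F_{\phi_t}(x_t) \leq F_\psi(x_t) + \delta \sum_i R_{h_i}(x_t),
\]
and since $\Theta_\a$ is monotone in each eigenvalue with $\Theta_\a(F_\psi)(x_t) = \hat{\Theta}$, a Lipschitz-in-eigenvalue estimate yields $\Theta_\a(F_{\phi_t})(x_t) \leq \hat{\Theta} + C\delta$ with $C$ depending only on $\|R_{h_i}\|_{C^0}$. Combined with \eqref{LBMCF} and the envelope identity $M_\delta'(t) \leq \partial_t Q_\delta|_{x_t}$, this gives $M_\delta'(t) \leq C\delta$, so
\[
M_\delta(t) \leq M_\delta(0) + C\delta\, t \leq \sup_X(\phi_0 - \psi) + C\delta\, t.
\]

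Fixing $t$ and sending $\delta \to 0^+$, since $Q_\delta \nearrow \phi_t - \psi$ pointwise off $\bigcup_i C_i$ we obtain $M_\delta(t) \to \sup_{X \backslash \bigcup_i C_i}(\phi_t - \psi) \leq \sup_X(\phi_0 - \psi) < \infty$; together with the $L^\infty$-bound on $\psi$ from Theorem \ref{solvdgMA} this yields the uniform upper bound $\sup_X \phi_t \leq C$. The lower bound $\inf_X \phi_t \geq -C$ follows by the symmetric argument applied to $\tilde{Q}_\delta := \phi_t - \psi - \delta \sum_i \log|s_i|_{h_i}^2$ via the minimum principle. The main obstacle is the singularity of $\psi$ along $\bigcup_i C_i$, which is precisely what the Tsuji barrier is designed to neutralize: it pushes the extrema off the singular set at the cost of an $O(\delta)$ error in the differential inequality, and this error vanishes in the limit for each fixed $t$. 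A fully rigorous implementation should first approximate $\psi$ by smooth $\hat{\Omega}_\e$-PSH solutions of a non-degenerate perturbation of \eqref{dgMA}---with uniform $L^\infty$ bound provided by \cite{EGZ09}---and take the joint limit in $\e$ and $\delta$.
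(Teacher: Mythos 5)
Your proof is correct, but it is genuinely different from the paper's. You compare Lagrangian phases directly: at a maximum of $Q_\delta = \phi_t - \psi + \delta\sum_i\log|s_i|_{h_i}^2$ the inequality $\dd Q_\delta \leq 0$ gives $F_{\phi_t} \leq F_\psi + \delta\sum_i R_{h_i}$, and you conclude $\Theta_\a(F_{\phi_t}) \leq \hat{\Theta} + C\delta$ by combining monotonicity of $\Theta_\a$ in the Hermitian ordering with the fact that $A \mapsto \Theta_\a(A)$ is globally Lipschitz (eigenvalue maps are $1$-Lipschitz and $\arctan$ is $1$-Lipschitz, so the constant does not depend on the possible blow-up of $F_\psi$ near $\cup_i C_i$). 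The paper instead rewrites $\Theta_\a - \hat{\Theta}$ as an arctangent of a Monge--Amp\`ere \emph{ratio} $\frac{(\hat\Omega+\dd\phi)^2}{(\hat\Omega+\dd\psi)^2} - 1$ and uses the auxiliary function $\Phi_\e = \phi_t - (1+\e)\psi + \e\sum_i a_i\log|s_i|_{h_i}^2 - \arctan(\e/A)t$; the $(1+\e)$ coefficient is tuned so that, together with the K\"ahlerity of $\hat\Omega - \sum a_i R_{h_i}$, one gets $\hat\Omega + \dd\phi \leq (1+\e)(\hat\Omega+\dd\psi)$ at the max, but squaring this requires a case analysis on the signs of the eigenvalues of $\hat\Omega + \dd\phi$ (which is not a priori nonnegative since $\cot\hat\Theta < 0$). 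Your phase-level comparison sidesteps both the $(1+\e)$ scaling and the case analysis entirely, and needs no lower bound on $\Re\zeta\sin\hat\Theta\cos\hat\Theta + \Im\zeta\sin^2\hat\Theta$. The paper also packages the time-growth into the $-\arctan(\e/A)t$ term so that the parabolic maximum principle directly forces $\hat t = 0$, whereas you invoke the Hamilton/envelope trick $M_\delta'(t)\leq\partial_t Q_\delta|_{x_t}$ and integrate; both are standard and yield the same clean bound $\sup_X(\phi_t - \psi)\leq\sup_X(\phi_0 - \psi)$ after $\delta\to 0$. One small remark: your closing caveat about first regularizing $\psi$ by non-degenerate approximations of \eqref{dgMA} is not actually needed here --- the Tsuji barrier already pushes the extremum into $X\setminus\bigcup_i C_i$, where Theorem \ref{solvdgMA} gives smoothness of $\psi$, so the argument is complete as written (you should, however, match the weights in the barrier to the $a_i$ or at least note that any positive weights suffice).
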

\begin{proof}
Since $\Theta_\a, \hat{\Theta} \in (\frac{\pi}{2}, \pi)$, on $X \backslash \bigcup_i C_i$ we compute
\begin{eqnarray*}
\ddt \phi_t &=& \Theta_\a-\hat{\Theta}\\
&=& \pi+\arctan \frac{\Im \zeta}{\Re \zeta}-\bigg( \pi+\arctan \frac{\sin \hat{\Theta}}{\cos \hat{\Theta}} \bigg) \\
&=& \arctan \frac{\frac{\Im \zeta}{\Re \zeta}-\frac{\sin \hat{\Theta}}{\cos \hat{\Theta}}}{1+\frac{\Im \zeta}{\Re \zeta} \cdot \frac{\sin \hat{\Theta}}{\cos \hat{\Theta}}}.
\end{eqnarray*}
The numerator in $\arctan (\cdot)$ is computed as
\begin{eqnarray*}
\frac{\Im \zeta}{\Re \zeta}-\frac{\sin \hat{\Theta}}{\cos \hat{\Theta}} &=& \frac{1}{\Re \zeta \sin \hat{\Theta} \cos \hat{\Theta}}(\sin \hat{\Theta} \cos \hat{\Theta} \Im \zeta-\sin^2 \hat{\Theta} \Re \zeta)\\
&=& \frac{1}{\Re \zeta \sin \hat{\Theta} \cos \hat{\Theta}} \frac{2 \cot \hat{\Theta} \a \wedge F_\phi-(\a^2-F_\phi^2)}{(1+\cot^2 \hat{\Theta}) \a^2}\\
&=& \frac{1}{\Re \zeta \sin \hat{\Theta} \cos \hat{\Theta}} \frac{(\cot \hat{\Theta} \a+F_\phi)^2-(1+\cot^2 \hat{\Theta})\a^2}{(1+\cot^2 \hat{\Theta}) \a^2}\\
&=& \frac{1}{\Re \zeta \sin \hat{\Theta} \cos \hat{\Theta}} \bigg( \frac{(\hat{\Omega}+\dd \phi)^2}{(\hat{\Omega}+\dd \psi)^2}-1 \bigg),
\end{eqnarray*}
where $\psi$ denotes the solution to the degenerate complex Monge--Amp\`ere equation \eqref{dgMA}. So we have
\[
\ddt \phi_t=\arctan \bigg[ \frac{1}{\Re \zeta \sin \hat{\Theta} \cos \hat{\Theta}+\Im \zeta \sin^2 \hat{\Theta}} \bigg( \frac{(\hat{\Omega}+\dd \phi)^2}{(\hat{\Omega}+\dd \psi)^2}-1 \bigg) \bigg].
\]
Since $\phi_0 \in \cH_{\HC}$, Lemma \ref{hpufl} shows that $\Re \zeta<0$ and $\Im \zeta=\lambda_1+\lambda_2 \geq c$ for some constant $c>0$ depending only on $\a$ and $\phi_0$. Thus we get a uniform lower bound
\[
\Re \zeta \sin \hat{\Theta} \cos \hat{\Theta}+\Im \zeta \sin^2 \hat{\Theta} \geq c \sin^2 \hat{\Theta}.
\]
Now we will use the trick in the proof of \cite[Proposition 2.2]{FLSW14}. For any $\e>0$, we set
\[
\Phi_\e:=\phi_t-(1+\e)\psi+\e \sum_{i=1}^N a_i \log|s_i|_{h_i}^2-\arctan \bigg(\frac{\e}{A} \bigg)t,
\]
where $A>0$ is a uniform constant (independent of $\e$ and $T$) determined later. Observe that $\Phi_\e$ is smooth on $X \backslash \bigcup_i C_i$, and tends to negative infinity along $\bigcup_i C_i$. Hence for each time $t$, $\Phi_\e$ achieves a maximum in $X \backslash \bigcup_i C_i$. Let $(\hat{x}, \hat{t})$ be a maximum point of $\Phi_\e$. Then at $(\hat{x}, \hat{t})$ we have $\dd \Phi_\e \leq 0$, which yields that
\begin{eqnarray*}
\hat{\Omega}+\dd \phi &=& (1+\e)(\hat{\Omega}+\dd \psi)-\e \bigg( \hat{\Omega}-\sum_{i=1}^N a_i R_{h_i} \bigg)+\dd \Phi_\e \\
&\leq& (1+\e)(\hat{\Omega}+\dd \psi),
\end{eqnarray*}
where we used the fact that $\hat{\Omega}-\sum_{i=1}^N a_i R_{h_i}$ is K\"ahler in the last inequality. Although we do not know whether $\hat{\Omega}+\dd \phi \geq 0$ at $(\hat{x}, \hat{t})$, we have a uniform lower bound
\[
\hat{\Omega}+\dd \phi=\cot \hat{\Theta} \a+F_\phi>\cot \hat{\Theta} \a
\]
since the K\"ahler condition $F_\phi>0$ is preserved under the flow. Let $\mu_i$ be the eigenvalues of $\hat{\Omega}+\dd \phi$ with respect to $\a$ at $(\hat{x}, \hat{t})$. There are three cases:
\begin{enumerate}
\item $\mu_1 \cdot \mu_2 \leq 0$.
\item $\mu_1 \leq 0$ and $\mu_2 \leq 0$.
\item $\mu_1 \geq 0$ and $\mu_2 \geq 0$.
\end{enumerate}
The first case is easy since we have $(\hat{\Omega}+\dd \phi)^2=\mu_1 \mu_2 \a^2 \leq 0$. In the second case, we have $(\hat{\Omega}+\dd \phi)^2 \leq \cot^2 \hat{\Theta} \a^2$ and
\[
\frac{(\hat{\Omega}+\dd \phi)^2}{(\hat{\Omega}+\dd \psi)^2}-1 \leq \frac{\cot^2 \hat{\Theta}\a^2}{(1+\cot^2 \hat{\Theta})\a^2}-1=-\frac{1}{1+\cot^2 \hat{\Theta}}<0.
\]
In these two cases, the derivative $\ddt \phi_t$ as well as $\ddt \Phi_t$ is clearly negative. In the third case, we have $(\hat{\Omega}+\dd \phi)^2 \leq (1+\e)^2(\hat{\Omega}+\dd \psi)^2$ and
\[
\frac{(\hat{\Omega}+\dd \phi)^2}{(\hat{\Omega}+\dd \psi)^2}-1 \leq (1+\e)^2 \frac{(\hat{\Omega}+\dd \psi)^2}{(\hat{\Omega}+\dd \psi)^2}-1=\e^2+2\e.
\]
Thus we get an estimate
\begin{eqnarray*}
\ddt \Phi_\e &=& \ddt \phi_t-\arctan \bigg( \frac{\e}{A} \bigg)\\
&\leq& \arctan \bigg( \frac{\e^2+2\e}{\Re \zeta \sin \hat{\Theta} \cos \hat{\Theta}+\Im \zeta \sin^2 \hat{\Theta}} \bigg)-\arctan \bigg( \frac{\e}{A} \bigg)\\
&\leq& \arctan \bigg( \frac{\e^2+2\e}{c\sin^2 \hat{\Theta}} \bigg)-\arctan \bigg( \frac{\e}{A} \bigg).
\end{eqnarray*}
This computation tells us that we should take $A=\frac{c}{3} \sin^2 \hat{\Theta}$. Summarizing the above, in all cases, we have
\[
\ddt \Phi_\e<0
\]
at $(\hat{x}, \hat{t})$ for all $\e \in (0,1)$, which implies that $\hat{t}=0$. Since the constant $A$ does not depend on $\e$ and $\psi$ is bounded, by letting $\e \to 0$ we have a uniform bound of $\phi_t$ as desired. In order to get a lower bound of $\phi_t$, we consider the minimum of the function
\[
\Psi_\e:=\phi_t-(1-\e)\psi-\e \sum_{i=1}^N a_i \log|s_i|_{h_i}^2+\arctan \bigg(\frac{\e}{B} \bigg)t
\]
for some uniform constant $B>0$. We observe that for each time $t$, $\Psi_\e$ achieves a minimum in $X \backslash \bigcup_i C_i$. Let $(\hat{x}, \hat{t})$ be a minimum point of $\Psi_\e$. Then at $(\hat{x}, \hat{t})$ we have $\dd \Psi_\e \geq 0$, which yields that
\begin{eqnarray*}
\hat{\Omega}+\dd \phi &=& (1-\e)(\hat{\Omega}+\dd \psi)+\e \bigg( \hat{\Omega}-\sum_{i=1}^N a_i R_{h_i} \bigg)+\dd \Psi_\e \\
&\geq& (1-\e)(\hat{\Omega}+\dd \psi).
\end{eqnarray*}
A difference from the estimate of $\Phi_\e$ is that this immediately implies
\[
(\hat{\Omega}+\dd \phi)^2 \geq (1-\e)^2(\hat{\Omega}+\dd \psi)^2
\]
since $\psi$ is $\hat{\Omega}$-PSH. Then the remaining part is similar.
\end{proof}
\subsection{Estimate for the eigenvalues}
\begin{lem} \label{ucontv}
There exist uniform constants $A, C>0$ such that
\[
v \leq \frac{C}{|s_1|_{h_1}^{2a_1 A} \cdots |s_N|_{h_N}^{2a_N A}}
\]
on $X \backslash \bigcup_i C_i$.
\end{lem}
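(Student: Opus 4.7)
Following the strategy of \cite[Proposition 3.1]{FLSW14} developed for the $J$-flow, the plan is to run a parabolic maximum principle argument on the auxiliary function
\[
Q_\e := \log v + A(\phi - (1+\e)\psi) + A\e \sum_{i=1}^N a_i \log |s_i|^2_{h_i},
\]
where $\psi$ is the solution to the degenerate complex Monge--Amp\`ere equation from Theorem \ref{solvdgMA}, $A > 0$ is a uniform constant to be chosen (independent of $\e$), and $\e \in (0,1)$ is small. Since $\log|s_i|^2_{h_i}\to -\infty$ along $C_i$ while $\log v,\phi,\psi$ remain locally bounded on $X\setminus \bigcup_i C_i$ (using Theorem \ref{ltime}, Theorem \ref{solvdgMA}, and Lemma \ref{zoest}), for each $T>0$ the function $Q_\e$ attains its maximum on $(X\setminus \bigcup_i C_i)\times [0,T]$ at some $(\hat x,\hat t)$. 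The case $\hat t = 0$ is handled by the boundedness of the initial data; otherwise one has $(\partial_t - \Delta_\eta)Q_\e \geq 0$ at $(\hat x,\hat t)$.

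Next I would estimate $(\partial_t - \Delta_\eta)Q_\e$ piece by piece. Lemma \ref{sderiv} gives $(\partial_t - \Delta_\eta)\log v \leq C$; the Poincar\'e--Lelong identity $\dd \log|s_i|^2_{h_i} = -R_{h_i}$ yields $(\partial_t - \Delta_\eta) \log|s_i|^2_{h_i} = \text{tr}_\eta R_{h_i}$, uniformly bounded; and the hypercritical bound $\lambda_i > c > 0$ together with Lemma \ref{zoest} makes $(\partial_t - \Delta_\eta)\phi$ uniformly bounded. The key term $(\partial_t - \Delta_\eta)(-A(1+\e)\psi) = A(1+\e)\Delta_\eta\psi$ is controlled by the Monge--Amp\`ere equation $(\hat\Omega + \dd\psi)^2 = (1 + \cot^2\hat\Theta)\alpha^2$: combining with the identity $\det\eta/\det\alpha = v^2$ available in complex dimension two, AM--GM on the simultaneous eigenvalues of $(\hat\Omega + \dd\psi)$ relative to $\eta$ yields
\[
\text{tr}_\eta(\hat\Omega + \dd\psi) \geq 2\sqrt{\det(\hat\Omega+\dd\psi)/\det\eta} = \frac{2\sqrt{1+\cot^2\hat\Theta}}{v}.
\]

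The Hessian bound $\dd Q_\e \leq 0$ at the maximum produces the matrix inequality
\[
A(\hat\Omega + \dd\phi) + \dd\log v \leq A(1+\e)(\hat\Omega + \dd\psi) - A\e \Big(\hat\Omega - \sum_i a_i R_{h_i}\Big),
\]
which, combined with the strict K\"ahler property $\hat\Omega - \sum_i a_i R_{h_i} \geq c_0 \alpha$ arising in the proof of Theorem \ref{solvdgMA}, contributes a negative term $-A\e c_0 \text{tr}_\eta\alpha$ after tracing with $\eta$. Assembling the four contributions and choosing $A$ large relative to $c_0, C$ and the bounded terms, one forces a pointwise bound on $v$ at $(\hat x,\hat t)$ in terms of $\e$; identifying $\e$ with the exponent in the statement and absorbing the boundedness of $\phi$ and $\psi$ from Lemma \ref{zoest} and Theorem \ref{solvdgMA} completes the estimate.

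The main obstacle lies in the degenerate character of the linearization $\eta^{i\bar j}$, whose eigenvalues $1/(1 + \lambda_i^2)$ shrink precisely when $v$ blows up; this is in sharp contrast with the $J$-flow setting of \cite{FLSW14}, where the linearization is genuinely comparable to an inverse K\"ahler metric. Producing from the maximum principle a negative contribution that actually grows with $v$ therefore requires a delicate interplay between the Monge--Amp\`ere lower bound on $\text{tr}_\eta(\hat\Omega + \dd\psi)$, the hypercritical positivity contained in $\text{tr}_\eta(\hat\Omega + \dd\phi)$, and the strict K\"ahler slack $\hat\Omega - \sum_i a_i R_{h_i} \geq c_0\alpha$. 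Balancing the free parameters $A$ and $\e$ against the uniform constant appearing in Lemma \ref{sderiv} is the principal technical hurdle.
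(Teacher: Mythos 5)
There is a genuine gap in your approach, and you have already put your finger on it in your last paragraph. The Monge--Amp\`ere-traced term gives $\mathrm{tr}_\eta(\hat\Omega+\dd\psi)\geq 2\sqrt{1+\cot^2\hat\Theta}/v$, and the K\"ahler-slack term gives $-A\e c_0\,\mathrm{tr}_\eta\alpha = -A\e c_0\sum_p (1+\lambda_p^2)^{-1}$. Both of these contributions tend to $0$ precisely as $v\to\infty$, because the eigenvalues of $\eta^{i\bar j}$ are $(1+\lambda_p^2)^{-1}$. Meanwhile the evolution inequality $(\p_t-\Delta_\eta)\log v\leq Cn(n-1)$ from Lemma~\ref{sderiv} supplies a bounded positive term that does \emph{not} shrink. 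Consequently no choice of $A$ large (independent of $\e$) forces $(\p_t-\Delta_\eta)Q_\e<0$ at a maximum where $v$ is huge: the competing negative terms simply vanish in that regime. This is not a delicate balancing problem to be finessed by tuning $A$ and $\e$ --- the mechanism that works for the $J$-flow in~\cite{FLSW14}, where the linearization is comparable to an inverse K\"ahler metric, really does break down here, and your proposal leaves the argument open at exactly this point.

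The paper resolves this by a different idea that bypasses the degeneracy of $\eta$ entirely and never brings $\psi$ into the second-order estimate. The test function is $\Phi=\log v-A\bigl(\phi_t-\sum_i a_i\log|s_i|_{h_i}^2\bigr)$, whose parabolic evolution is bounded below by a $C$-subsolution-type expression
\[
\Theta_\a-\hat\Theta-\sum_p\frac{\cot\hat\Theta+\lambda_p}{1+\lambda_p^2}+\e\sum_p\frac{1}{1+\lambda_p^2},
\]
obtained by writing $\tilde F:=\hat F-\sum_i a_i R_{h_i}$ and using the strict positivity $\cot\hat\Theta\,\a+\tilde F\geq\e\a$. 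The key technical input, inspired by~\cite{PT17,Sze18}, is the auxiliary lemma showing this expression admits a $v$-\emph{independent} lower bound $\kappa>0$ once the largest eigenvalue exceeds a threshold $R$. Heuristically, as $\lambda_1\to\infty$ the terms $(\cot\hat\Theta+\lambda_1)/(1+\lambda_1^2)$ and $\e/(1+\lambda_1^2)$ both decay, but so does the deficit $\Theta_\a-\hat\Theta$, and the concavity of $\cot$ on $(\frac{\pi}{2},\pi)$ keeps the combination uniformly positive. That is the positive quantity which beats the curvature constant $Cn(n-1)$ after choosing $A$ large, and it plays the role that your $\psi$-based term was supposed to play but cannot. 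Your auxiliary function and the route through Theorem~\ref{solvdgMA} are appropriate for the $C^0$-estimate (Lemma~\ref{zoest}), but not for this eigenvalue estimate.
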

\begin{proof}
We set $\tilde{F}:=\hat{F}-\sum_{i=1}^N a_i R_{h_i}$ so that $\cot \hat{\Theta} \a+\tilde{F}>0$ from the assumption. In particular, $\hat{F}$ is K\"ahler and the inequality
\[
\cot \hat{\Theta} \a+\tilde{F}>\e \a
\]
holds for sufficiently small $\e>0$. On $X \backslash \bigcup_i C_i$, we compute the evolution equation of $\phi_t-\sum_{i=1}^N a_i \log |s_i|_{h_i}^2$ in the normal coordinates as
\begin{eqnarray*}
\bigg( \ddt-\Delta_\eta \bigg) \bigg(\phi_t-\sum_{i=1}^N a_i \log |s_i|_{h_i}^2 \bigg) &=& \Theta_\a-\hat{\Theta}-\eta^{j \bar{j}} \bigg( \phi-\sum_{i=1}^N a_i \log |s_i|_{h_i}^2 \bigg)_{j \bar{j}} \\
&=& \Theta_\a-\hat{\Theta}-\eta^{j \bar{j}} (F_{j \bar{j}}-\hat{F}_{j \bar{j}}+\hat{F}_{j \bar{j}}-\tilde{F}_{j \bar{j}})\\
&=& \Theta_\a-\hat{\Theta}+\eta^{j \bar{j}}(\tilde{F}_{j \bar{j}}-F_{j \bar{j}})\\
& \geq & \Theta_\a-\hat{\Theta}-\eta^{j \bar{j}}(\cot \hat{\Theta} \a_{j \bar{j}}+F_{j \bar{j}})+\e \eta^{j \bar{j}} \a_{j \bar{j}}.
\end{eqnarray*}
In order to get the estimate for the last line, we use the following lemma, which is inspired by the estimate for parabolic $C$-subsolutions \cite[Lemma 3]{PT17} (or \cite[Proposition 5]{Sze18} in the elliptic case):
\begin{lem}
For $\g,\d>0$, $\hat{\theta} \in [\frac{\pi}{2}+\g, \pi-\g]$, set
\[
D:=\bigg\{(x_1,x_2) \in \R^2 \bigg| \frac{\pi}{2}+\g \leq \arctan x_1+\arctan x_2 \leq \pi-\g \bigg\},
\]
\[
G(x_1,x_2):=\sum_p \arctan x_p-\hat{\theta}-\sum_p \frac{\cot \hat{\theta}+x_p}{1+x_p^2}+\sum_p \frac{\d}{1+x_p^2}, \quad (x_1,x_2) \in D.
\]
Then there exist $\kappa, R>0$ (depending only on $\g$, $\d$ and $\hat{\theta}$) such that $G(x_1,x_2)>\kappa$ for all $(x_1,x_2) \in D \cap \{x_1 \geq R\}$.
\end{lem}
\begin{proof} \label{concav}
We will show this by contradiction. So suppose that for $\kappa_\ell \to 0$, $R_\ell \to \infty$, there exists $(x_{1,\ell}, x_{2,\ell}) \in D \cap \{x_1 \geq R_\ell \}$ such that $G(x_{1,\ell},x_{2,\ell}) \leq \kappa_\ell$ holds. Since $R_\ell \to \infty$, we observe that $x_{1,\ell} \to \infty$. On the other hand, since
\[
\g<\arctan x_{2,\ell}<\pi-\arctan x_{1,\ell}-\g \to \frac{\pi}{2}-\g,
\]
by passing to a subsequence, we may assume $x_{2,\ell} \to x_{2,\infty}$. Put $\theta:=\frac{\pi}{2}+\arctan x_{2,\infty} \in [\frac{\pi}{2}+\g,\pi-\g]$. Since $x_{2,\infty}=\tan(\theta-\frac{\pi}{2})=-\cot \theta$, by taking the limit $\ell \to \infty$ in $G(x_{1,\ell},x_{2,\ell}) \leq \kappa_\ell$, we get
\[
\frac{\pi}{2}+\arctan x_{2,\infty}-\hat{\theta}-\frac{\cot \hat{\theta}+x_{2,\infty}}{1+x_{2,\infty}^2} \leq -\frac{\d}{1+x_{2,\infty}^2}.
\]
By using the equation $\theta=\frac{\pi}{2}+\arctan x_{2,\infty}$, we see that
\[
(1+\cot^2 \theta)(\theta-\hat{\theta})+\cot \theta-\cot \hat{\theta} \leq -\d.
\]
On the other hand, since $\frac{d}{dy} \cot y=-(1+\cot^2 y)$ and $\cot (\cdot)$ is concave on $[\frac{\pi}{2}+\g, \pi-\g]$, the LHS must be non-negative, that yields a contradiction.
\end{proof}
Applying the above lemma, we obtain uniform constants $\kappa, R>0$ depending only on $\hat{\Theta}$, $\e$, $\a$ and $\phi_0$. Now let us consider the maximum of the function
\[
\Phi:=\log v-A \bigg( \phi_t-\sum_{i=1}^N a_i \log |s_i|_{h_i}^2 \bigg),
\]
where the uniform constant $A>0$ is determined in the last part of the proof. Note that $\Phi$ is smooth on $X \backslash \bigcup_i C_i$, and tends to negative infinity along $\bigcup_i C_i$. Hence for each time $t$, $\Phi$ achieves a maximum in $X \backslash \bigcup_i C_i$. Let $(\hat{x}, \hat{t})$ be a maximum point of $\Phi$. Without loss of generality, we may assume $\lambda_1 \geq \lambda_2$ and $\lambda_1(\hat{x}, \hat{t}) \geq R$. Then by Lemma \ref{concav}, we have
\begin{eqnarray*}
\bigg( \ddt-\Delta_\eta \bigg) \bigg( \phi_t-\sum_{i=1}^N a_i \log |s_i|_{h_i}^2 \bigg) &\geq& \Theta_\a-\hat{\Theta}-\sum_p \frac{\cot \hat{\Theta}+\lambda_p}{1+\lambda_p^2}+\sum_p \frac{\e}{1+\lambda_p^2} \\
&\geq& \kappa
\end{eqnarray*}
at $(\hat{x}, \hat{t})$. Subtracting this from the evolution equation of $\log v$ (computed in the proof of Lemma \ref{sderiv}), we get
\[
\bigg( \ddt-\Delta_\eta \bigg) \Phi \leq Cn(n-1)-A \kappa.
\]
So if we set $A:=\frac{Cn(n-1)+1}{\kappa}$, we obtain $\big( \ddt-\Delta_\eta \big) \Phi<0$ at $(\hat{x}, \hat{t})$, and hence $\hat{t}=0$. Combining with the uniform bound of $\phi_t$, we obtain the desired statement.
\end{proof}
\subsection{Higher order estimates and the completion of the proof of Theorem \ref{convf}}
Lemma \ref{ucontv} gives a uniform two-sided control of $F$ away from $\bigcup_i C_i$. As in the proof of Theorem \ref{ltime}, we can therefore apply the standard local theory to \eqref{LBMCF} and obtain the following:
\begin{thm} \label{unifphi}
Let $X$ be a compact complex surface with a K\"ahler form $\a$ and a closed real $(1,1)$-form $\hat{F}$ satisfying $\cot \hat{\Theta} \a+\hat{F} \geq 0$. Let $\phi_t$ be the line bundle mean curvature flow with $\phi_0 \in \cH_{\HC}$. Then for any compact subset $K \subset X \backslash \bigcup_i C_i$ and $k \geq 0$, there exists a constant $C=C(k,K)$ such that
\[
\| \phi_t \|_{C^k(K)} \leq C
\]
for all $t \in [0,\infty)$.
\end{thm}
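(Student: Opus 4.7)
The plan is to mimic the proof of Theorem \ref{ltime}, but purely locally, leveraging the global $C^0$-bound of Lemma \ref{zoest} and the weighted upper bound on $v$ from Lemma \ref{ucontv}. Fix a compact set $K \subset X \setminus \bigcup_i C_i$ and choose nested open neighborhoods $K \Subset K'' \Subset K' \Subset X \setminus \bigcup_i C_i$. On the closure $\overline{K'}$ one has $|s_i|_{h_i} \geq \delta > 0$ for some $\delta = \delta(K') > 0$, so Lemma \ref{ucontv} gives a uniform bound $v \leq C(K')$ on $\overline{K'} \times [0,\infty)$.

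First I would promote this to a uniform two-sided eigenvalue bound on $\overline{K'}$. The hypercritical phase is preserved globally by Lemma \ref{hpufl}, so $\lambda_i \geq c > 0$ on $X \times [0,\infty)$ for a constant $c$ depending only on $\a$ and $\phi_0$. Combined with $v = \sqrt{\prod_i(1+\lambda_i^2)} \leq C(K')$ on $\overline{K'}$, each $\lambda_i$ is also bounded from above, which gives
\[
C(K')^{-1} \a \leq F_{\phi_t} \leq C(K') \a \quad \text{on } \overline{K'} \times [0,\infty).
\]

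Second, under these bounds the operator $\phi \mapsto \Theta_\a(\phi)$ driving \eqref{LBMCF} is uniformly parabolic on $\overline{K'}$, since the coefficients $\eta^{j\bar k}$ of its linearization are equivalent to $\a^{j\bar k}$. The hypercritical phase ensures concavity of $\Theta_\a$ in $F$ along the flow, as already exploited in \cite{JY17} and recalled in Section \ref{ltimef}. I would then invoke the interior parabolic Evans--Krylov estimate of Krylov \cite{Kry82} and Wang \cite{Wan12}, combined with the global $C^0$-bound from Lemma \ref{zoest}, to obtain
\[
\|\phi_t\|_{C^{2,\gamma}(K'')} \leq C
\]
for some $\gamma \in (0,1)$ and $C = C(K'')$, uniformly in $t \in [0, \infty)$.

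Third, a standard bootstrap closes the argument: with $\phi_t \in C^{2,\gamma}(K'')$ uniformly, the coefficients $\eta^{j\bar k}$ lie in $C^\gamma(K'')$, so differentiating \eqref{LBMCF} and applying interior parabolic Schauder estimates yields uniform $C^{3,\gamma}(K)$ bounds; iterating promotes this to uniform $C^{k}(K)$ bounds for every $k \geq 0$. The main ``obstacle'' here is really just bookkeeping: because the $v$-bound from Lemma \ref{ucontv} degenerates along $\bigcup_i C_i$, one must carefully nest the compact subsets and invoke only \emph{interior} regularity theorems, with no estimate that tries to reach down to the divisors. All the nontrivial analytic content has already been consumed in the proofs of Lemmas \ref{zoest} and \ref{ucontv}.
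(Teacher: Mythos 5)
Your argument is exactly the paper's: Lemma \ref{ucontv} together with the hypercritical lower bound on the eigenvalues gives a uniform two-sided bound $C^{-1}\a \leq F_{\phi_t} \leq C\a$ on compact subsets away from $\bigcup_i C_i$, after which the uniformly parabolic concave operator is handled by interior Evans--Krylov and Schauder bootstrapping, exactly as invoked in the proof of Theorem \ref{ltime}. The paper compresses this to a one-sentence appeal to ``the standard local theory''; your write-up just spells out the nested-domain bookkeeping explicitly.
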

Moreover, we can show the following:
\begin{lem} \label{pcpha}
We have the pointwise convergence $|d \Theta_\a|_\eta^2 \to 0$ on $X \backslash \bigcup_i C_i$ as $t \to \infty$.
\end{lem}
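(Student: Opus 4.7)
The strategy is to exploit the monotonicity of the $\cJ$-functional from Proposition \ref{monotonicity}(5). First I claim that $\cJ(\phi_t)$ is uniformly bounded. By definition $\cJ(\phi_t)$ is (up to a constant factor) the imaginary part of an integral of $\phi_t$ against the closed complex $(2,2)$-forms $\omega_j := (\a+\sqrt{-1} F_{\phi_t})^j \wedge (\a+\sqrt{-1}\hat{F})^{2-j}$, $j=0,1,2$. Since $F_{\phi_t}$ stays K\"ahler in the fixed class $[\hat F]$, the intersection numbers $\int_X \a \wedge F_{\phi_t}=[\a]\cdot[\hat F]$ and $\int_X F_{\phi_t}^2 = [\hat F]^2$ are constant, and a fixed upper bound $\pm \hat F \leq C \a$ pointwise gives $\int_X |\omega_j| \leq C'$ uniformly. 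Together with the $C^0$-bound of Lemma \ref{zoest} this yields $|\cJ(\phi_t)| \leq C$. Monotonicity then forces $\cJ(\phi_t) \searrow \cJ_\infty \in \R$, and integrating the formula
\[
-\ddt \cJ(\phi_t) = \int_X u \cdot v \sin u \, \a^2, \qquad u := \Theta_\a(\phi_t)-\hat\Theta,
\]
in time yields a finite total integral. Lemma \ref{hpufl} confines $u$ to a compact subinterval of $(-\tfrac{\pi}{2},\tfrac{\pi}{2})$ on which $u \sin u \geq c\, u^2$, and $v \geq 1$ pointwise, giving the spacetime bound
\[
\int_0^\infty \int_X u^2 \, \a^2 \, dt < \infty.
\]

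Next I would upgrade this integrability to uniform convergence $u \to 0$ on compact subsets of $X \setminus \bigcup_i C_i$ by a tube argument. Fix compacts $K \subset K' \subset X \setminus \bigcup_i C_i$. Theorem \ref{unifphi} supplies uniform $C^k$-bounds on $\phi_t$ over $K'$ for every $k$, so $u$ is uniformly Lipschitz in space and $\partial_t u = \Delta_\eta u$ is uniformly bounded in time on $K' \times [1,\infty)$. If $u$ failed to converge uniformly to $0$ on $K$, one could extract $(x_k,t_k) \in K \times [1,\infty)$ with $t_k \to \infty$ and $|u(x_k,t_k)| \geq \e$; the Lipschitz bound then forces $|u|\geq \e/2$ on spacetime tubes $B(x_k,\delta) \times (t_k-\delta, t_k+\delta)$ of some fixed size $\delta>0$. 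After thinning so that $t_{k+1} > t_k + 2\delta$, these tubes are disjoint and each contributes at least $c\,\delta^5 \e^2$ to $\int_0^\infty \int_X u^2 \a^2\, dt$, contradicting the spacetime $L^2$-bound. The main subtlety is exactly here: the interior regularity of Theorem \ref{unifphi} and the control on $v$ from Lemma \ref{ucontv} degenerate along $\bigcup_i C_i$, which is why the convergence is claimed only on the complement.

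To conclude, I apply the standard H\"older interpolation
\[
\|u\|_{C^1(K)} \leq C \|u\|_{C^0(K')}^{1/2} \|u\|_{C^2(K')}^{1/2},
\]
using the uniform $C^2$-bound on $K'$ and $\|u\|_{C^0(K')} \to 0$, to get $\|du\|_{C^0(K)} \to 0$. Since the Hermitian metric $\eta$ is uniformly bounded above and below on $K$ by Theorem \ref{unifphi}, this gives $|d\Theta_\a|_\eta^2 \to 0$ uniformly on $K$, and therefore pointwise on all of $X \setminus \bigcup_i C_i$, as required.
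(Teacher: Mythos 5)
Your proof is correct but takes a genuinely different route from the paper. The paper works with the volume functional $V$, using $\ddt V(\phi_t)=-\int_X|d\Theta_\a|_\eta^2\,v\,\a^2$ from \cite[Proposition 3.4]{JY17} together with $V\geq 0$ to get $\int_0^\infty\int_X|d\Theta_\a|_\eta^2\,v\,\a^2\,dt\leq V(\phi_0)$ immediately, with no $C^0$-estimate required; the tube argument then kills $|d\Theta_\a|_\eta^2$ directly. You instead use the $\cJ$-functional, whose dissipation $-\ddt\cJ=\int_X u\,v\sin u\,\a^2$ controls the zeroth-order quantity $u=\Theta_\a-\hat\Theta$ rather than the gradient, which is why you must (i) separately establish boundedness of $\cJ(\phi_t)$ (requiring Lemma \ref{zoest} and the pointwise comparison $\pm\hat F\leq C\a$), (ii) run the tube argument to get uniform $u\to 0$ on compacts, and then (iii) interpolate against the uniform $C^2$-bounds from Theorem \ref{unifphi} to pass from $u\to 0$ to $du\to 0$. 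All of these extra steps are sound — the confinement of $u$ to a compact subinterval of $(-\tfrac{\pi}{2},\tfrac{\pi}{2})$ by Lemma \ref{hpufl} gives $u\sin u\geq c\,u^2$, $v\geq 1$ is automatic, and the degeneration along $\bigcup_i C_i$ is correctly flagged as the reason only the complement is accessible. The paper's argument is shorter and self-contained with respect to the $C^0$-bound; your argument is a bit longer and borrows more of the flow's regularity theory, but it also yields the stronger conclusion of uniform convergence on compact subsets of $X\setminus\bigcup_i C_i$ rather than mere pointwise convergence.
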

\begin{proof}
As shown in \cite[Proposition 3.4]{JY17}, the time derivative of the volume functional $V$ is given by
\[
\ddt V(\phi_t)=-\int_X |d \Theta_\a|_\eta^2 v \a^2.
\]
By integrating in $t$ we get
\begin{equation} \label{ufint}
\int_0^\infty \int_X |d \Theta_\a|_\eta^2 v \a^2 dt=V(\phi_0)-\lim_{t \to \infty}V(\phi_t) \leq V(\phi_0)
\end{equation}
since $V (\cdot)$ is non-negative. In the same way as in the proof of \cite[Theorem 1.1]{FLSW14}, we can prove by showing a contradiction. So we suppose that the statement does not hold. Then there exists $x \in X \backslash \bigcup_i C_i$, $\r>0$ and a sequence of times $t_j \to \infty$ such that $|d \Theta_\a|_\eta^2(x,t_j)>\r$ for all $j$. On the other hand, since we have uniform bounds for $|d \Theta_\a|_\eta^2$ and all its time and space derivatives away from $\bigcup_i C_i$, there exists a neighborhood $U$ of $x$ with $\overline{U} \subset X \backslash \bigcup_i C_i$ and $\tau>0$ (independent of $j$) such that $|d \Theta_\a|_\eta^2>\frac{\r}{2}$ on $U \times [t_j,t_j+\tau]$. Also, a uniform lower bound for the eigenvalues $\lambda_i$ yields $v>c$ on $X$ for some uniform constant $c>0$. Thus
\[
\int_0^\infty \int_X |d \Theta_\a|_\eta^2 v \a^2 dt \geq \sum_j \int_{t_j}^{t_j+\tau} \int_U |d \Theta_\a|_\eta^2 v \a^2 dt \geq \sum_j \frac{\r c \tau}{2} \int_U \a^2=\infty.
\]
This contradicts \eqref{ufint}.
\end{proof}
In order to prove the convergence on the level of potentials and Corollary \ref{Jbound}, we need to study the limiting behavior of the functionals $\cI$, $\cJ$ along the flow:
\begin{lem} \label{conve}
Let $\{\varphi_j \} \subset \cH_{\HC}$ and $\varphi_\infty$ be a bounded $\hat{F}$-PSH function on $X$. Let $Y$ be a proper subvariety of $X$. Suppose that
\begin{enumerate}
\item there exists $C>0$ such that $\|\varphi_j\|_{C^0} \leq C$.
\item $\varphi_j \to \varphi_\infty$ in $C_{\rm loc}^\infty(X \backslash Y)$ as $j \to \infty$.
\end{enumerate}
Then the quantities $\cI(\varphi_\infty)$ and $\cJ(\varphi_\infty)$ are well-defined, and the convergence $\cI(\varphi_j) \to \cI(\varphi_\infty)$, $\cJ(\varphi_j) \to \cJ(\varphi_\infty)$ hold as $j \to \infty$.
\end{lem}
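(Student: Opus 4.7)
My plan proceeds in three stages. The main pluripotential-theoretic input is continuity of the complex Monge--Amp\`ere operator in capacity.

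\emph{Stage 1: well-definedness.} Since $\varphi_\infty$ is bounded and $\hat{F}$-PSH, the current $F_{\varphi_\infty}:=\hat{F}+\dd\varphi_\infty$ is closed positive, and by Bedford--Taylor theory the powers $F_{\varphi_\infty}^k$ ($k=1,2$) are well-defined closed positive $(k,k)$-currents, with $F_{\varphi_\infty}^2$ a Radon measure that charges no pluripolar set. Expanding $(\a+\sqrt{-1}F_{\varphi_\infty})^j\wedge(\a+\sqrt{-1}\hat{F})^{2-j}$ binomially shows that every term in $CY_\C(\varphi_\infty)$ has the form $\int_X\varphi_\infty\,T$, where $T$ is a wedge product of $\a$, $\hat{F}$ and at most two factors of $F_{\varphi_\infty}$, i.e.\ a signed Radon measure of finite total variation paired against the bounded function $\varphi_\infty$. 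Hence $CY_\C(\varphi_\infty)$, $\cI(\varphi_\infty)$ and $\cJ(\varphi_\infty)$ are all well-defined.

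\emph{Stage 2: weak convergence of Monge--Amp\`ere measures.} Each $\varphi_j\in\cH_{\HC}$ satisfies $F_{\varphi_j}>0$, so is $\hat{F}$-PSH. I first claim $\varphi_j\to\varphi_\infty$ in Bedford--Taylor capacity. Given $\delta,\eta>0$, outer regularity of the capacity combined with $Y$ being pluripolar produces an open $V\supset Y$ with $\mathrm{cap}(V)<\eta$, and the uniform convergence of $\varphi_j$ to $\varphi_\infty$ on the compact set $X\setminus V$ forces $\{|\varphi_j-\varphi_\infty|>\delta\}\subset V$ for all $j$ large. The standard continuity theorem for the complex Monge--Amp\`ere operator along sequences of uniformly bounded PSH functions converging in capacity then gives $F_{\varphi_j}^k\to F_{\varphi_\infty}^k$ weakly as currents/measures for $k=1,2$.

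\emph{Stage 3: passage to the limit.} Every term of $CY_\C(\varphi_j)$ has the form $\int_X\varphi_j\,T_j$, with $T_j$ either a fixed smooth form or a wedge product involving $F_{\varphi_j}^k$. Smooth-form terms converge by dominated convergence (a.e.\ convergence plus uniform $C^0$-bound). For the remaining terms, fix a cutoff $\chi_\e\in C^\infty(X;[0,1])$ equal to $1$ outside an $\e$-tubular neighborhood $U_\e$ of $Y$ and vanishing in a smaller neighborhood, and split
\[
\int_X\varphi_j\,T_j=\int_X\chi_\e\varphi_j\,T_j+\int_{U_\e}(1-\chi_\e)\varphi_j\,T_j.
\]
On $\mathrm{supp}(\chi_\e)\subset X\setminus Y$ both $\varphi_j\to\varphi_\infty$ and $T_j\to T_\infty$ smoothly, so the first piece tends to $\int_X\chi_\e\varphi_\infty T_\infty$ as $j\to\infty$. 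The second piece is bounded in absolute value by $C\,|T_j|(\overline{U_\e})$; by weak convergence of the positive measure $|T_j|$ (from Stage~2) and upper semicontinuity on compacts, $\limsup_{j}|T_j|(\overline{U_\e})\leq|T_\infty|(\overline{U_\e})$, which tends to zero as $\e\to 0$ because $|T_\infty|$ places no mass on the pluripolar set $Y$. Letting $j\to\infty$ and then $\e\to 0$ yields $\int_X\varphi_j T_j\to\int_X\varphi_\infty T_\infty$. Summing the terms gives $CY_\C(\varphi_j)\to CY_\C(\varphi_\infty)$, and hence $\cI(\varphi_j)\to\cI(\varphi_\infty)$ and $\cJ(\varphi_j)\to\cJ(\varphi_\infty)$.

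The main obstacle is Stage~2: rigorously invoking continuity of the complex Monge--Amp\`ere operator under convergence in capacity for uniformly bounded PSH functions. Once this pluripotential ingredient is in place, the rest of the argument is a routine combination of cutoffs, dominated convergence, and weak convergence of positive measures.
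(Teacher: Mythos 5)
Your argument is correct, and it follows the same overall template as the paper's (very terse) proof, which simply cites the analogous Lemma 3.2 of Fang--Lai--Song--Weinkove: well-definedness of $CY_\C(\varphi_\infty)$ via Bedford--Taylor, and convergence via a cutoff near $Y$ plus the fact that the limit measures place no mass on the pluripolar set $Y$. Stage~1 and Stage~3 are exactly right, including the double limit $j \to \infty$, then $\e \to 0$, and the Portmanteau upper bound $\limsup_j T_j(\overline{U_\e}) \leq T_\infty(\overline{U_\e})$ on the closed set $\overline{U_\e}$.

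The one place where you reach for heavier machinery than is strictly needed is Stage~2, which you yourself flag as the ``main obstacle.'' Invoking Xing's theorem (continuity of the Monge--Amp\`ere operator under convergence in capacity for uniformly bounded PSH potentials) is a legitimate route, and your derivation of convergence in capacity from the $C^0$-bound plus $C^\infty_{\rm loc}(X \setminus Y)$ convergence is correct. But in this compact Kähler setting it can be replaced by a purely cohomological mass-counting argument that avoids capacity altogether: the measures $F_{\varphi_j}^2 \geq 0$ all have total mass $[\hat{F}]^2$, and the Bedford--Taylor measure $F_{\varphi_\infty}^2$ also has total mass $[\hat{F}]^2$. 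Hence any weak subsequential limit $\mu$ of $F_{\varphi_j}^2$ satisfies $\mu(X) = F_{\varphi_\infty}^2(X)$ and, by smooth convergence on $X \setminus Y$, agrees with $F_{\varphi_\infty}^2$ there; since $F_{\varphi_\infty}^2(Y) = 0$, one deduces $\mu(Y) = 0$ and therefore $\mu = F_{\varphi_\infty}^2$. Equivalently, in Stage~3 one can write directly $T_j(\overline{U_\e}) \leq [\hat{F}]^2 - T_j(X \setminus U_{2\e})$ and use uniform convergence of the smooth densities on the compact set $X \setminus U_{2\e} \subset X \setminus Y$, bypassing the need to establish global weak convergence first. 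This is almost certainly what the cited FLSW14 lemma does, and it keeps the proof self-contained within Bedford--Taylor basics. The linear-in-$\varphi_j$ terms $F_{\varphi_j} \wedge \a$ converge weakly already from $\varphi_j \to \varphi_\infty$ in $L^1$, so they never needed the capacity machinery in the first place.

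One small point worth making explicit: the analogous bound $\big|\int_{U_\e}(1-\chi_\e)\varphi_\infty T_\infty\big| \leq C\,T_\infty(\overline{U_\e})$ is needed alongside the bound for the $\varphi_j$-piece in order to control $\big|\int\varphi_j T_j - \int\varphi_\infty T_\infty\big|$; you use it implicitly, but it should be stated.
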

\begin{proof}
The definition of $CY_\C(\varphi)$ includes the quantities $F_\varphi^2$, $\hat{F} \wedge F_\varphi$, $F_\varphi \wedge \a$. For a bounded $\hat{F}$-PSH function $\varphi_\infty$, we can define these quantities as finite measures on $X$ which do not charge pluripolar subsets (\cf \cite{BT82}). So the quantity $CY_\C(\varphi_\infty)$ (and hence $\cI(\varphi_\infty)$ and $\cJ(\varphi_\infty)$) is well-defined. Along the same line as in the proof of \cite[Lemma 3.2]{FLSW14}, the convergence properties $\cI(\varphi_j) \to \cI(\varphi_\infty)$, $\cJ(\varphi_j) \to \cJ(\varphi_\infty)$ follow from the convergence $F_{\varphi_j} \to F_{\varphi_\infty}$ in $C_{\rm loc}^\infty(X \backslash Y)$ since $Y$ is a pluripolar subset.
\end{proof}
Now we are ready to prove Theorem \ref{convf}.
\begin{proof}[Proof of Theorem \ref{convf}]

Now we invoke Lemma \ref{zoest} and Theorem \ref{unifphi} to find that there exists a sequence of times $t_\ell \to \infty$ and a bounded function $\phi_\infty$ on $X$ such that $\phi_{t_\ell} \to \phi_\infty$ in $C_{\rm loc}^\infty(X \backslash \bigcup_i C_i)$. Also, from a uniform lower bound $F_{\phi_t} \geq c \a$, we know that $F_{\phi_\infty}:=\hat{F}+\dd \phi_\infty$ is a K\"ahler current, which is smooth on $X \backslash \bigcup_i C_i$. By Lemma \ref{conve}, we have $\cI(\phi_{t_\ell}) \to \cI(\phi_\infty)$. So combining with the monotonicity of $\cI$ (\cf Proposition \ref{monotonicity}), we find that $\cI(\phi_t)$ is bounded from below and converges to $m:=\lim_{t \to \infty} \cI(\phi_t)$ (where the number $m$ is uniquely determined by the initial data $\phi_0$, and independent of a choice of subsequences $\{\phi_{t_\ell}\}$). Moreover, by Lemma \ref{pcpha}, the function $\phi_\infty$ satisfies $d\Theta_\a(\phi_\infty)=0$. Since $\bigcup_i C_i$ has real codimension $2$ and the complement $X \backslash \bigcup_i C_i$ is connected, this yields that $\Theta_\a(\phi_\infty)$ is constant and
\begin{equation} \label{cmap}
\Im \bigg( e^{-\sqrt{-1} \Theta_\a (\phi_\infty)} (\a+\sqrt{-1} F_{\phi_\infty})^2 \bigg)=0
\end{equation}
on $X \backslash \bigcup_i C_i$. The form $(\a+\sqrt{-1} F_\varphi)^2$ includes the quantities $F_\varphi^2$, $\a \wedge F_\varphi$. So in the same way as in the previous lemma, we can successfully define these quantities for the bounded $\hat{F}$-PSH function $\phi_\infty$, as finite measures on $X$ which do not charge pluripolar subsets. Thus the equality \eqref{cmap} holds on $X$ in the sense of currents. By integrating on $X$ and using the cohomological condition, we get $\Theta_\a(\phi_\infty)=\hat{\Theta}$. Similarly, one can check that $\phi_\infty$ is a $\hat{\Omega}$-PSH function satisfying
\[
(\hat{\Omega}+\dd \phi_\infty)^2=(1+\cot^2 \hat{\Theta}) \a^2
\]
in the sense of currents by using the dHYM condition just as in the proof of \cite[Theorem 1.2]{JY17}. Hence we know that $\phi_\infty$ is the unique solution to \eqref{dgMA} with $\cI(\phi_\infty)=m$ (\cf Theorem \ref{solvdgMA}).

We remark that all of the above arguments still hold for all subsequences of $\phi_t$. Thus we conclude that $\phi_t$ converges to the unique solution $\phi_\infty$ of \eqref{dgMA} with $\cI(\phi_\infty)=m$ in $C_{\rm loc}^{\infty}(X \backslash \bigcup_i C_i)$ as $t \to \infty$ without taking subsequences. The convergence $F_{\phi_t} \to F_{\phi_\infty}$ on $X$ in the sense of currents follows from the convergence $F_{\phi_t} \to F_{\phi_\infty}$ in $C_{\rm loc}^\infty(X \backslash \bigcup_i C_i)$. This completes the proof.
\end{proof}
\begin{proof}[Proof of Corollary \ref{Jbound}]
Take any $\phi_0 \in \cH_{\HC}$. By Theorem \ref{convf} and the monotonicity of $\cJ$ (\cf Proposition \ref{monotonicity}), we have
\[
\cJ(\phi_0) \geq \lim_{t \to \infty} \cJ(\phi_t)=\cJ(\phi_\infty),
\]
where the function $\phi_\infty$ is the unique solution to \eqref{dgMA} (modulo constant). So this gives a desired lower bound of $\cJ$ since $\cJ(\phi_\infty+c)=\cJ(\phi_\infty)$ for all $c \in \R$, and hence the quantity $\cJ(\phi_\infty)$ is independent of a choice of $\phi_0 \in \cH_{\HC}$. This completes the proof.
\end{proof}
\newpage

\end{document}